\newtheorem{thm}{Theorem}[section]
\newtheorem{cor}[thm]{Corollary}
\newtheorem{lem}[thm]{Lemma}
\newtheorem{pro}[thm]{Proposition}
\theoremstyle{definition}
\numberwithin{equation}{section}
\newcommand{\X}{\mathbb{X}}
\newcommand{\ex}{\mathbb{E}}
\newcommand{\re}{\textup{Re}}
\newcommand{\im}{\textup{Im}}
\newcommand{\pr}{\mathbb{P}}
\newcommand{\h}{\mathcal{H}_k}
\newcommand{\sums}{\sideset{}{^\flat}\sum}
\newcommand{\sumh}{\sideset{}{^h}\sum}
\begin{document}

\baselineskip=17pt

\title{Large sums of Hecke eigenvalues of holomorphic cusp forms}

\author{Youness Lamzouri}

\address{Department of Mathematics and Statistics,
York University,
4700 Keele Street,
Toronto, ON,
M3J1P3
Canada}

\email{lamzouri@mathstat.yorku.ca}

\date{}

\begin{abstract}  
Let $f$ be a Hecke cusp form of weight $k$ for the full modular group, and let $\{\lambda_f(n)\}_{n\geq 1}$ be the sequence of its normalized Fourier coefficients. Motivated by the problem of the first sign change of $\lambda_f(n)$, we investigate the range of $x$ (in terms of $k$) for which there are cancellations in the sum $S_f(x)=\sum_{n\leq x} \lambda_f(n)$. We first show that  $S_f(x)=o(x\log x)$ implies that $\lambda_f(n)<0$ for some $n\leq x$. We also prove that $S_f(x)=o(x\log x)$ in the range $\log x/\log\log k\to \infty$ assuming the Riemann hypothesis for $L(s, f)$, and furthermore that this range is best possible unconditionally. More precisely, we establish the existence of many Hecke cusp forms $f$ of large weight $k$, for which $S_f(x)\gg_A x\log x$, when $x=(\log k)^A.$  Our results are $GL_2$ analogues of  work of Granville and Soundararajan for character sums, and could also be generalized to other families of automorphic forms. 
\end{abstract}

\subjclass[2010]{Primary 11F30}

\thanks{The author is partially supported by a Discovery Grant from the Natural Sciences and Engineering Research Council of Canada.}

\maketitle

\section{Introduction}
Let $k$ be a positive even integer, and denote by $\h$ the set of Hecke cusp forms of weight $k$ for the full modular group $\Gamma=SL(2, \mathbb{Z})$. Then, $\h$ is an orthonormal basis for the space of holomorphic cusp forms of weight $k$ for $\Gamma$ and we have 
$$ |\h|=\frac{k}{12}+O\left(k^{2/3}\right).$$
Given $f\in \h$, its Fourier expansion can be written in the form
$$ f(z)=\sum_{n=1}^{\infty}\lambda_f(n) n^{(k-1)/2}e(nz), \ \  \text{ for } \ \ \im(z)>0,$$
where $e(z)=e^{2\pi iz}.$
The $\lambda_f(n)$ are the normalized
eigenvalues of the Hecke operators $T_n$, and satisfy the well-known Hecke relations:
\begin{equation}\label{Hecke}
 \lambda_f(m)\lambda_f(n)=\sum_{d|(m,n)}\lambda_f\left(\frac{mn}{d^2}\right),
\end{equation}
for all $m, n\geq 1$. In particular, $\lambda_f$ is a real-valued multiplicative function of $n$. Moreover, it also satisfies the following deep bound due to Deligne
\begin{equation}\label{Deligne}
\left|\lambda_f(n)\right|\leq \tau(n),
\end{equation}
where $\tau$ is the divisor function. These facts are standard and may be found for example  in Chapter 14 of \cite{IwKo}. 

In \cite{KLSW}, Kowalski, Lau, Soundararajan and Wu studied the signs of the sequence $\lambda_f(n)$.
Their results show a strong analogy between these signs and the values of quadratic Dirichlet characters, and especially between the first negative Fourier coefficient and the problem of the least quadratic non-residue, which has a long history in analytic number theory. 
Let $n_f$ be the smallest positive integer $n$ such that $\lambda_f(n)<0$. The best known bound for $n_f$ is due to Matom\"aki \cite{Ma}, who improved the authors of \cite{KLSW} by showing that 
$$n_f\ll k^{3/4}.$$ This is probably far from the truth, since it is known that $n_f\ll (\log k)^2$ under the assumption of the generalized Riemann hypothesis (GRH). In the other direction, Theorem 3 of \cite{KLSW} shows that $n_f\gg \sqrt{\log k}$ for many Hecke cusp forms $f$ of weight $k$. A folklore conjecture asserts that the correct order of magnitude for the maximal values of $n_f$ should be $(\log k)^{1+o(1)}$.

In this paper, we explore $GL_2$ analogues of certain classical problems concerning short character sums and the least quadratic non-residue. More precisely, we investigate the size of the short sum of Hecke eigenvalues
$$S_f(x):=\sum_{n\leq x} \lambda_f(n),$$
and its relation to the first negative Fourier coefficient of $f$. Our results are inspired by the work of Granville and Soundararajan \cite{GrSo} on  character sums.  In particular, Corollaries \ref{Conditional2} and \ref{Omega2} below can be regarded as $GL_2$ analogues of Corollary A of \cite{GrSo}.

Using Deligne's bound \eqref{Deligne}, one obtains the ``trivial'' bound 
$$
\big| S_f(x)\big|\leq \sum_{n\leq x}\tau(n)=(1+o(1))x\log x.
$$
 Our first result shows that if $S_f(x)$ is substantially smaller than this bound, namely that \begin{equation}\label{SMALL}
S_f(x)=o(x\log x) \ \ \ \ \ \  (\text{as } x, k\to \infty),
\end{equation}
 then we must have $n_f\leq x$. The proof relies on an argument of Kowalski, Lau, Soundararajan and Wu \cite{KLSW}, together with a nice result of Hildebrand \cite{Hi} concerning quantitative lower bounds for mean values of non-negative multiplicative functions. 
 \begin{thm}\label{Cancellations}
Let $f\in \h$. Let $x\geq 2$ and assume that $\lambda_f(n)\geq 0$ for all $n\leq x$. Then, we have
$$ \sum_{n\leq x}\lambda_f(n)\geq c_0 x\log x,$$
for some absolute constant $c_0>0$. 
\end{thm}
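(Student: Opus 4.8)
The plan is to exploit the multiplicativity of $\lambda_f$ together with the hypothesis that $\lambda_f(n)\ge 0$ for all $n\le x$, which makes $\lambda_f$ behave like a non-negative multiplicative function on this range. The starting point is the Hecke relation $\lambda_f(p)^2=\lambda_f(p^2)+1$, so at primes $p\le x$ the non-negativity of $\lambda_f(p^2)$ gives no extra information, but $\lambda_f(p^2)\ge 0$ forces $\lambda_f(p)^2\ge 1$, hence $\lambda_f(p)\ge 1$ for every prime $p\le \sqrt{x}$. This is the key structural input from \cite{KLSW}: on the primes up to $\sqrt{x}$ the eigenvalues are not merely non-negative but bounded below by $1$, so $\sum_{p\le \sqrt{x}}\lambda_f(p)\log p\gg \sqrt{x}$ by the prime number theorem.

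Next I would feed this lower bound for the values on primes into Hildebrand's theorem \cite{Hi}, which provides a quantitative lower bound for the mean value $\frac{1}{x}\sum_{n\le x} g(n)$ of a non-negative multiplicative function $g$ in terms of $\sum_{p\le x} g(p)\log p / p$ (or a similar weighted sum over primes). Applying this with $g=\lambda_f$ — legitimate precisely because we assumed $\lambda_f(n)\ge 0$ for $n\le x$ — and using the lower bound $\lambda_f(p)\ge 1$ for $p\le\sqrt x$, one should obtain $\sum_{n\le x}\lambda_f(n)\gg x\cdot \exp\big(\sum_{\sqrt{x}<p\le x, \ \text{or all } p\le x} (\lambda_f(p)-1)/p\big)$ up to constants, and the point is that the contribution of the small primes already forces the mean value to be of the maximal order $\log x$. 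Indeed one expects that combining $\lambda_f(p)\ge 1$ on $p\le\sqrt x$ with Hildebrand's inequality yields $S_f(x)\ge c_0\, x\log x$ directly, since the ``trivial'' upper bound $x\log x$ is attained (up to constants) exactly when $\lambda_f(p)$ is close to $2=\tau(p)$ on average, and Hildebrand's bound is of the shape needed to certify that being bounded below by $1$ on a positive-density set of primes suffices to reach order $x\log x$.

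The main obstacle will be checking that Hildebrand's quantitative lower bound, as stated in \cite{Hi}, applies in the form needed and actually produces the extra factor $\log x$ rather than merely $x$ times a constant. Concretely, one must verify that the version of Hildebrand's inequality being invoked does not require an \emph{upper} hypothesis of the type $g(p)\le 1$ (some mean-value estimates for multiplicative functions are stated under $0\le g(p)\le 1$), and if it does, one should instead work with a truncated or rescaled function — for instance compare $\lambda_f$ with the divisor function $\tau$, or split off the behaviour at primes $p\le \sqrt x$ where we control things and treat the larger primes via Deligne's bound $|\lambda_f(p)|\le 2$ — so that the resulting estimate still carries the full $\log x$. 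A secondary technical point is handling prime powers $p^a$ with $a\ge 2$: by Deligne $|\lambda_f(p^a)|\le a+1$, and since we have assumed $\lambda_f(n)\ge 0$ on all of $[1,x]$ the Euler factors $\sum_{a\ge 0}\lambda_f(p^a)/p^{as}$ are all $\ge 1$ in the relevant region, so they only help; one just needs to record this cleanly to legitimately apply the multiplicative machinery. Once these points are settled, the constant $c_0$ comes out absolute because the prime number theorem input and Hildebrand's constant are absolute.
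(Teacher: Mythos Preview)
Your overall architecture is right --- use the non-negativity hypothesis to force lower bounds on $\lambda_f(p)$, then feed these into Hildebrand's lower bound --- and this is exactly the paper's strategy. But there is a genuine quantitative gap in the prime input you extract.

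You deduce only $\lambda_f(p)\ge 1$ for $p\le\sqrt{x}$, from $\lambda_f(p^2)\ge 0$. Plugging this into Hildebrand's theorem (which in the form used here gives, with $K=2$,
\[
\frac{1}{x}\sum_{n\le x} g(n)\gg \prod_{p\le x}\Big(1-\frac1p\Big)\Big(1+\frac{g(p)}{p}\Big)
\]
up to a bounded $\sigma$-factor) yields at best $\prod_{p\le x}(1+g(p)/p)\gg\log x$, hence $\sum_{n\le x} g(n)\gg x$, \emph{not} $x\log x$. To reach $x\log x$ you need $\sum_{p\le x} g(p)/p=2\log\log x+O(1)$, i.e.\ $g(p)$ must be close to $2$, not merely $\ge 1$, on the bulk of the small primes. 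Your claim that ``being bounded below by $1$ on a positive-density set of primes suffices to reach order $x\log x$'' is false by exactly this calculation.

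The missing idea, which the paper draws from \cite{KLSW}, is to use \emph{all} prime powers at once: if $x^{1/(m+1)}<p\le x^{1/m}$ then $\lambda_f(p^j)=\sin((j+1)\theta_f(p))/\sin\theta_f(p)\ge 0$ for every $1\le j\le m$, which forces $0\le\theta_f(p)\le \pi/(m+1)$ and hence $\lambda_f(p)\ge 2\cos(\pi/(m+1))$. For small primes (large $m$) this is close to $2$, and a short computation gives $\sum_{p\le x} 2\cos(\pi/(m+1))/p = 2\log\log x+O(1)$. The paper packages this by introducing the auxiliary squarefree-supported multiplicative function $h_x$ with $h_x(p)=2\cos(\pi/(m+1))$ on the relevant dyadic-in-exponent range, proving $\sum_{n\le x}\lambda_f(n)\ge\sum_{n\le x} h_x(n)$, and then applying Hildebrand with $K=2$ to $h_x$. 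Your worry about an upper constraint $g(p)\le 1$ is not an issue --- Hildebrand's theorem as invoked allows $0\le g(p)\le K$ for any $K\ge 1$ --- and working with the squarefree-supported $h_x$ also sidesteps your secondary concern about prime powers.
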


Let $f\in \h$. The $L$-function attached to $f$ is defined by 
\begin{equation}\label{LFUNCTION}
L(s, f)=\sum_{n=1}^{\infty} \frac{\lambda_f(n)}{n^s}= \prod_{p} \left(1-\frac{e^{i\theta_{f}(p)}}{p}\right)^{-1} \prod_{p} \left(1-\frac{e^{-i\theta_{f}(p)}}{p}\right)^{-1}, \text{ for } \re(s)>1,
\end{equation}
where $\theta_f(p) \in [0, \pi]$. 
 It is known that $L(s, f)$ extends analytically to the entire complex plane, and satisfies a functional equation that relates $L(s, f)$ to $L(1-s, f)$ (see for example Section 5.11 of \cite{IwKo}). A standard application of Perron's formula together with the convexity bound for $L(s, f)$ imply that
\begin{equation} \label{Convexity}
S_f(x)\ll x^{1/2+\varepsilon} \cdot k^{1/2+\varepsilon}, 
\end{equation}
and hence one has $S_f(x)=o(x\log x)$ in the range $x\geq k^{1+\varepsilon}.$ This range can be improved to $x\geq k^{1-\delta}$, for some $\delta>0$, by using subconvexity bounds for $L(s, f)$ (see for example \cite{MiVe}). Furthermore, assuming GRH for $L(s, f)$ one has the much stronger bound
\begin{equation}\label{WeakRangeGRH}S_f(x)\ll x^{1/2+\varepsilon} \exp\left(c_1\frac{\log k}{\log\log k}\right),
\end{equation}
for some absolute constant $c_1>0$. This shows that \eqref{SMALL} is valid in the larger range
$$x\geq \exp\left(c_2\frac{\log k}{\log\log k}\right)
$$
for some constant $c_2>0$, conditionally on the GRH.
Exploiting an idea of Montgomery and Vaughan \cite{MoVa}, we substantially improve this range under the assumption of GRH.
\begin{cor}\label{Conditional2}
Let $f\in \h$, and assume GRH for $L(s, f)$. In the range $\log x/\log\log k\to \infty$, we have
$$ \sum_{n\leq x} \lambda_f(n)=o(x\log x).$$
\end{cor}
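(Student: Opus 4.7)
The natural reduction is to pass from the partial sum $S_f(x)$ to its logarithmically weighted analogue $T_f(x) := \sum_{n \le x} \lambda_f(n)/n$ via Abel summation,
$$S_f(x) = x\,T_f(x) - \int_1^{x} T_f(t)\,dt.$$
Since $L(1,f) \ll \log k$ by standard estimates, we have $L(1,f) = o(x \log x)$ throughout our range; therefore it suffices to show that $T_f(t) - L(1,f)$ is uniformly small for $t \le x$, in such a way that both $x\cdot(T_f(x)-L(1,f))$ and $\int_1^x(T_f(t)-L(1,f))\,dt$ are $o(x\log x)$. My goal is therefore a bound of the shape $T_f(t) - L(1,f) \ll t^{-\delta}(\log k)^{O(1)}$ for some $\delta = \delta(k) > 0$.

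\textbf{Main argument.} I would express $T_f(x)$ via Perron's formula applied to $L(s+1,f) = \sum_n (\lambda_f(n)/n)\, n^{-s}$, and shift the contour leftward to $\re(s) = -\delta$ for some $0 < \delta < 1/2$. Under GRH, $L(s+1,f)$ is analytic in the strip $-1/2 < \re(s) < c$, so the only residue picked up is the simple pole of $L(s+1,f)\,x^s/s$ at $s=0$, whose residue is $L(1,f)$. The work is then in estimating the shifted integral on $\re(s) = -\delta$. The Montgomery--Vaughan idea enters via the GRH bound of Littlewood type,
$$\log|L(\sigma + it, f)| \ll \frac{(\log k(1+|t|))^{2-2\sigma}}{(2-2\sigma)\log\log(k(1+|t|))},$$
applied with $\delta \asymp 1/\log\log k$: for this small value of $\delta$, the exponent $2-2\sigma = 2\delta$ is so small that $(\log k)^{2\delta} = O(1)$, hence $|L(1-\delta+it, f)| = O(1)$ for $|t|$ not too large. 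This bounds the shifted integral by $x^{-\delta}(\log k)^{O(1)}$, giving $|T_f(x) - L(1,f)| \ll \exp(-c\log x/\log\log k)(\log\log k)^{O(1)}$, which is $o(1)$ precisely when $\log x/\log\log k \to \infty$. Substituting back into the Abel summation identity gives the required $S_f(x) = o(x\log x)$.

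\textbf{Main obstacle.} The delicate point is the joint calibration of the shift parameter $\delta$ and the truncation height $T$ in Perron's formula. One must control the horizontal portions of the contour at height $T$, where unconditional pointwise bounds on $|L(s,f)|$ are weak; this requires either a smoothed version of Perron or a mean-value estimate on those horizontal segments, together with a careful choice $T = (\log k)^{O(1)}$ so that the Perron truncation error, of order $(\log x)^{O(1)}/T$, remains dominated by the main term. What makes the Montgomery--Vaughan style argument effective is precisely that it allows the work to be done on a line where $|L|$ is genuinely bounded, rather than on the critical line where even under GRH one only has $|L(1/2+it,f)| \ll \exp(c\log k/\log\log k)$; this pointwise bottleneck is exactly what forces the weaker range $x \ge \exp(c_2\log k/\log\log k)$ in the direct Perron approach.
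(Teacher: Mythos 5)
Your Abel-summation reduction to showing $T_f(t):=\sum_{n\le t}\lambda_f(n)/n=L(1,f)+o(\log x)$ is sound, and the contour-shift setup is legitimate (GRH is needed only to bound $L$, since $L(s+1,f)x^s/s$ is meromorphic with the single pole at $s=0$ regardless). The gap is in the key pointwise bound. At $\sigma=1-\delta$ with $\delta\asymp 1/\log\log k$, the Littlewood-type GRH estimate does \emph{not} give $|L(1-\delta+it,f)|=O(1)$: the formula you quote degenerates in this transition zone (its right-hand side is $\asymp 1/((2-2\sigma)\log\log k)$ there), and the correct statement carries an extra term of size $\log\log\log$ of the conductor. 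Indeed, under GRH $\log|L(1-\delta+it,f)|$ is controlled by $2\sum_{p\le(\log k)^2}p^{-\sigma}+O(1)=2\log\log\log k+O(1)$ when $\delta\asymp1/\log\log k$, so the best available bound is $|L(1-\delta+it,f)|\ll(\log\log k)^{2+o(1)}$ --- and this order is actually attained by forms with $\lambda_f(p)$ close to $2$ for all small primes (exactly the forms produced in Theorem \ref{Omega}; already $L(1,f)$ can be $\asymp(\log\log k)^2$). With the correct bound your shifted integral gives $T_f(x)-L(1,f)\ll x^{-\delta}(\log\log k)^{2+o(1)}$ up to harmless logarithmic factors, and you need this to be $o(\log x)$. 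Writing $\psi=\log x/\log\log k$ and $\delta=c_0/\log\log k$, this forces $e^{c_0\psi}\psi\gg(\log\log k)^{1+o(1)}$, i.e. $\psi\gg\log\log\log k$; enlarging $\delta$ only inflates the $(\log k)^{2\delta}$ in the exponent and makes matters worse. So your argument proves the corollary only for $\log x\gg\log\log k\cdot\log\log\log k$, and it misses precisely the critical sub-range where $\log x/\log\log k\to\infty$ slowly --- the range that makes the statement sharp in view of Corollary \ref{Omega2}. (A secondary point: your Perron truncation error is $(\log x)^{O(1)}/T$ only if the divisor function near $n\asymp x$ is handled by a short-interval average bound rather than the pointwise bound $\tau(n)\ll n^{\varepsilon}$; this is fixable.)

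The paper's proof avoids pointwise bounds on $L$ near $\re(s)=1$ altogether, and this is what the Montgomery--Vaughan idea actually refers to here: on the line $\re(s)=1+1/\log x$ one replaces $L(s,f)$ by the $y$-friable Euler product $L_y(s,f)$, and GRH (through Lemma \ref{Borel} and Lemma \ref{ApproxFriable}) shows $\log L-\log L_y\ll(\log y)^2\log k/\sqrt{y}$, a saving by a power of $\log k$ once $y$ is a suitable power of $\log k$. This yields Theorem \ref{Conditional}; taking $y=(\log k)^3$, bounding the friable sum trivially by $|\Psi(x,y;\lambda_f)|\le\Psi(x,y;\tau)\ll e^{-u/2}x\log x$ (Lemma \ref{Friable}), and disposing of $x>k$ via \eqref{WeakRangeGRH}, gives the corollary: the saving $e^{-u/2}$ requires only $u=\log x/\log y\to\infty$, with no $(\log\log k)^{O(1)}$ prefactor to beat. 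To rescue your route you would need to import some such friable decomposition (or another device) to eliminate the $(\log\log k)^{2}$ loss inherent in pointwise bounds at $\sigma=1-O(1/\log\log k)$.
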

  We shall deduce this result from the following theorem,  which shows that under GRH, we can approximate $S_f(x)$ by the corresponding sum of $\lambda_f(n)$ over \emph{friable} (or smooth) numbers $n$, which are positive integers having only small prime factors. A positive integer $n$ is said to be $y$-friable if $P(n)\leq y$, where $P(n)$ denotes the largest prime factor of $n$, with the standard convention $P(1)=1$.
 \begin{thm}\label{Conditional}
Let $f\in \h$, and assume GRH for $L(s, f)$. Then, for all real numbers $x, y$ such that $(\log k)^2(\log\log k)^8\leq y\leq x\leq k$ we have
$$ \sum_{n\leq x} \lambda_f(n)=\sum_{\substack{n\leq x\\ P(n)\leq y}}\lambda_f(n)+O\left(\frac{(\log k)(\log y)^4}{\sqrt{y}}x\log x\right).$$
\end{thm}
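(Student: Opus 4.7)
Following the Montgomery--Vaughan approach for character sums, the plan is to decompose the non-$y$-friable contribution to $S_f(x)$ according to the largest prime factor $P(n)$, use the Hecke multiplicativity \eqref{Hecke} to extract a factor $\lambda_f(p)$, and bound the resulting prime sum under GRH. Writing $R(x,y) := S_f(x) - \sum_{n\leq x,\, P(n)\leq y}\lambda_f(n)$, for each $n$ with $P(n)>y$ I set $p := P(n)$ and $m := n/p$, so that $P(m)\leq p$; the Hecke relation then gives
$$\lambda_f(n) = \lambda_f(p)\lambda_f(m) - \mathbf{1}_{p\mid m}\,\lambda_f(m/p).$$
Summing over $n$ and splitting on the indicator yields $R(x,y) = M - E$, where
$$M = \sum_{y<p\leq x}\lambda_f(p)\sum_{\substack{m\leq x/p\\ P(m)\leq p}}\lambda_f(m),\qquad E = \sum_{\substack{y<p\\ p^2\leq x}}\sum_{\substack{m'\leq x/p^2\\ P(m')\leq p}}\lambda_f(m').$$
Deligne's bound \eqref{Deligne} immediately gives $|E|\ll \sum_{p>y}(x/p^2)\log x \ll (x\log x)/y$, which is absorbed by the stated error.

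For the main term $M$, I interchange the order of summation, obtaining $M = \sum_{m\leq x/y}\lambda_f(m)\,\sigma(m,x)$ with $\sigma(m,x):=\sum_{\max(y,P(m))<p\leq x/m}\lambda_f(p)$. Under GRH for $L(s,f)$, the explicit formula combined with the Riemann--von Mangoldt count $N(T)\ll T\log(kT)$ for non-trivial zeros of $L(s,f)$ yields
$$\sum_{p\leq T}\lambda_f(p)\log p \ll T^{1/2}(\log kT)^2,$$
once the contribution of prime powers $p^a$ with $a\geq 2$ is absorbed via Deligne. A short partial summation then produces, uniformly for $m\leq x/y$ and $x/m\leq k$, a bound of shape $|\sigma(m,x)|\ll (x/m)^{1/2}(\log k)^{c_1}/(\log y)^{c_2}$ for small integers $c_1,c_2$. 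Substituting into $M$, applying $|\lambda_f(m)|\leq\tau(m)$ together with the elementary estimate $\sum_{m\leq N}\tau(m)/\sqrt{m}\ll \sqrt{N}\log N$, and using the hypothesis $y\geq(\log k)^2(\log\log k)^8$ to absorb the remaining polylogarithmic factors in $k$, one recovers the stated error.

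The principal obstacle is the GRH-conditional estimate on $\sum_{p\leq T}\lambda_f(p)\log p$, which rests on the explicit formula for $L(s,f)$ together with control of $\sum_{|\gamma|\leq T}|\rho|^{-1}$ over the non-trivial zeros $\rho = 1/2+i\gamma$. Thereafter the remaining work is careful bookkeeping of logarithmic factors; the delicate point is to track the $\log y$, $\log k$, and $\log x$ exponents precisely enough to match the specific error $\frac{(\log k)(\log y)^4}{\sqrt{y}}\,x\log x$ recorded in the theorem.
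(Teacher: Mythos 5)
Your decomposition is sound as far as it goes: writing $n=pm$ with $p=P(n)>y$, using the Hecke relation to get $\lambda_f(n)=\lambda_f(p)\lambda_f(m)-\mathbf{1}_{p\mid m}\lambda_f(m/p)$, bounding $E$ trivially, and reducing to $\sigma(m,x)=\sum_{p}\lambda_f(p)$ is a legitimate Montgomery--Vaughan style route, and it is genuinely different from the paper's argument. But the quantitative chain you propose does not close, and the final sentence ("using the hypothesis $y\geq(\log k)^2(\log\log k)^8$ to absorb the remaining polylogarithmic factors in $k$") is where it breaks. From your GRH input $\sum_{p\leq T}\lambda_f(p)\log p\ll T^{1/2}(\log kT)^2$, partial summation gives $\sigma(m,x)\ll (x/m)^{1/2}(\log k)^2/\log y$ (since $x/m\geq y$ and $x\leq k$), and then $\sum_{m\leq x/y}\tau(m)/\sqrt{m}\ll\sqrt{x/y}\log x$ yields $|M|\ll \frac{(\log k)^2}{(\log y)\sqrt{y}}\,x\log x$. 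Compared with the stated error $\frac{(\log k)(\log y)^4}{\sqrt{y}}x\log x$, this is worse by a factor $\asymp\log k/(\log y)^5$, which is unbounded throughout the range $y\leq\exp\bigl((\log k)^{1/5}\bigr)$; at the bottom of the permitted range, where $\log y\asymp\log\log k$, the loss is $\asymp\log k/(\log\log k)^5$. The hypothesis on $y$ cannot absorb any extra power of $\log k$: it only guarantees $\sqrt{y}\geq(\log k)(\log\log k)^4$, i.e.\ it barely makes the \emph{stated} error $\leq x\log x$. In particular, for $y$ a fixed power of $\log k$ (exactly the regime needed to deduce Corollary \ref{Conditional2}, where the paper takes $y=(\log k)^3$) your bound is not even $o(x\log x)$ unless that power exceeds $4$, so the theorem as stated is not recovered.

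The gap is repairable within your framework, but it requires a sharper explicit-formula input than the one you quote: using the unit-interval zero count $N_f(t+1)-N_f(t)\ll\log(k(|t|+2))$ one gets $\sum_{|\gamma|\leq T}|\rho|^{-1}\ll\log T\log(kT)+\log k$, hence $\sum_{n\leq T}\Lambda(n)b_f(n)\ll\sqrt{T}\bigl(\log T\log(kT)+\log k\bigr)$, and after partial summation $\sigma(m,x)\ll\sqrt{x/m}\,\log k$; this gives the error $O\bigl(\frac{\log k}{\sqrt{y}}x\log x\bigr)$, which is in fact stronger than the theorem's. The loss in your version comes precisely from replacing $\log T\log(kT)$ by $(\log kT)^2$ when $T$ is as small as $y\asymp(\log k)^2$. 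For comparison, the paper proceeds entirely differently: it applies Perron's formula to both $S_f(x)$ and the friable sum on the line $\re(s)=1+1/\log x$, writes $L(s,f)-L_y(s,f)=L_y(s,f)\bigl(\exp(\log L(s,f)-\log L_y(s,f))-1\bigr)$, and proves under GRH (via a truncated Perron integral for $\log L$, a contour shift to $\re(z)=1/2-\sigma+1/\log y$, and a Borel--Carath\'eodory bound) that $\log L(s,f)-\log L_y(s,f)\ll(\log y)^2\log k/\sqrt{y}$ there; combined with $L_y(s,f)\ll(\log y)^2$ this produces exactly the stated error term without ever touching individual prime sums. Either fix is acceptable, but as written your argument proves a weaker statement than Theorem \ref{Conditional}.
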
 
For an arithmetic function $g$, we define
$$ \Psi(x, y;g):=\sum_{\substack{n\leq x\\ P(n)\leq y}} g(n).$$
The asymptotic behaviour of $\Psi(x, y; g)$ was investigated for a large class of multiplicative functions $g$ by several authors, and notably by Tenenbaum and Wu \cite{TeWu1}.
When $g$ is  the divisor function $\tau$,
de Bruijn and van Lint  \cite{BrLi} proved that there exists a differentiable function $\rho_2: [0, \infty)\to \mathbb{R}$ such that
\begin{equation}\label{Bruijn}
\Psi(x, y;\tau)\sim \rho_2(u) \cdot x\log y,  \text{ where } u:=\frac{\log x}{\log y},
\end{equation}
in the range $u\ll 1$. The function $\rho_2$ is defined by the differential-difference equation
\begin{equation}\label{DiffDiff}
u\rho_2'(u)=\rho_2(u)-2\rho_2(u-1),
\end{equation}
subject to the initial condition $\rho_2(u)=u$ for $0\leq u\leq 1$. It is known that $\rho_2(u)>0$ for any $u>0$ and that $\rho_2(u)=u^{-u(1+o(1))}$ for large $u$ (see for example \cite{He}). In fact, $\rho_2$ is the square convolution of the standard Dickman-de Bruijn function $\rho$, which appears in the asymptotic formula for the counting function of friable integers.
%\eqref{DiffDiff} is equivalent to the integral delay equation
%$$ u\rho_2(u)=2\int_{u-1}^u \rho_2(t)dt.$$
The range of validity of the asymptotic formula \eqref{Bruijn} was improved to $u\leq \exp\left((\log y)^{3/5-\varepsilon}\right)$ by Smida \cite{Sm}, and hence in this range we have
$$ \Psi(x, y; \lambda_f)\leq \Psi(x, y;\tau) \ll u^{-u(1+o(1))} x\log x,$$ 
by \eqref{Deligne}. For our purposes, it is enough to use the following weaker bound that holds uniformly for $10\leq y\leq x$ (see Lemma \ref{Friable} below)
\begin{equation}\label{DecayFriable}
\Psi(x, y;\tau) \ll e^{-u/2} x\log x.
\end{equation}
Combining this bound with Theorem \ref{Conditional} imply Corollary \ref{Conditional2}.

We now investigate the largest range of $x$ (in terms of $k$) for which one has 
\begin{equation}\label{LARGE}
S_f(x)\gg x\log x.
\end{equation}
Recall that $n_f\gg \sqrt{\log k}$ for many Hecke cusp forms $f$ of weight $k$ by Theorem 3 of \cite{KLSW}. In view of Theorem \ref{Cancellations}, this shows that  \eqref{LARGE} is valid for such $f$ with $x=\sqrt{\log k}$. On the other hand, since $n_f\ll (\log k)^2$ on GRH, one might guess that \eqref{LARGE} does not hold in the range $x\gg (\log k)^{2+\varepsilon}$. We prove that this is not the case, by showing that for any $A>1$, there are many Hecke cusp forms $f$ of weight $k$ such that \eqref{LARGE} holds for $x=(\log k)^A$. This shows that the range of Corollary \ref{Conditional2} is best possible, and that conditionally on GRH the converse of Theorem \ref{Cancellations} does not hold.
\begin{cor}\label{Omega2}
Let $k$ be a large even integer. Let $A>1$ be fixed, and $x=(\log k)^A$. There are at least $k^{1-1/\log\log k}$ Hecke cusp forms $f\in \h$ such that 
$$ \sum_{n\leq x} \lambda_f(n)\gg_A x\log x.$$
\end{cor}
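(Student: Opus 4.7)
The plan is to adapt the construction of Granville and Soundararajan \cite{GrSo}: we exhibit many $f \in \h$ which \emph{resonate} with the divisor function $\tau$ on integers with small prime factors. Fix $A>1$ and choose a threshold $y$ slightly smaller than $\log k$, so that $u:=\log x/\log y$ is a bounded constant (comparable to $A$). The goal is to produce at least $k^{1-1/\log\log k}$ Hecke eigenforms $f$ for which $\theta_f(p)$ lies in a suitably short window near $0$ for \emph{every} prime $p\leq y$. For such $f$, the Hecke multiplicativity \eqref{Hecke} together with the standard identity $\lambda_f(p^a)=\sin((a+1)\theta_f(p))/\sin\theta_f(p)$ forces $\lambda_f(n)$ to be a positive multiple of $\tau(n)$ on all $y$-friable $n\leq x$. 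Hence the $y$-friable contribution to $S_f(x)$ is comparable to $\Psi(x,y;\tau)$, and by \eqref{Bruijn} (valid since $u$ is bounded) this is $\asymp_A x\log y \asymp_A x\log x$.

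To count the resonant forms, I would invoke the Petersson trace formula. For $y$ sufficiently small in terms of $k$, it yields a joint vertical Sato--Tate type equidistribution of $(\lambda_f(p))_{p\leq y}$ with an error from the Kloosterman terms that is negligible in our range. The Sato--Tate mass of the local condition $\theta_f(p)\in[0,\eta_p]$ is of order $\eta_p^3$; one must take $\eta_p\asymp \log p/\log x$ (equivalently $\eta_p\asymp 1/a_p$, where $a_p:=\lfloor \log x/\log p\rfloor$) in order to force $\lambda_f(p^a)\geq 0$ for every $a\leq a_p$. Partial summation then gives $\sum_{p\leq y}\log(1/\eta_p) \ll \log k/\log\log k$, with an implicit constant that can be made at most $1/3$ by calibrating $y$ accordingly. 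Thus the joint Sato--Tate mass is at least $k^{-1/\log\log k}$, which, multiplied by $|\h|\asymp k$, produces at least $k^{1-1/\log\log k}$ resonant forms.

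Finally, we must control the contribution of non-$y$-friable integers. Writing $S_f(x) = \Sigma_1(f) + \Sigma_2(f)$ with $\Sigma_1(f) = \sum_{n\leq x,\ P(n)\leq y}\lambda_f(n)$ (which is $\gg_A x\log x$ by the above) and $\Sigma_2(f) = \sum_{n\leq x,\ P(n)>y}\lambda_f(n)$, one must show $|\Sigma_2(f)| = o(x\log x)$ for the bulk of the resonant forms. This is the \emph{main obstacle}. I would estimate the harmonic second moment $\sumh_{f} \Sigma_2(f)^2$, restricted to the resonant set, by expanding the square and applying the Petersson formula $\sumh_{f}\lambda_f(m)\lambda_f(n)=\delta_{m,n}+\text{Kloosterman}$; since the defining condition of the resonant set involves only the primes $p\leq y$ and is essentially orthogonal to $\lambda_f(q)$ at primes $q>y$ that drive $\Sigma_2$, this second moment remains of order $o((x\log x)^2)$ when normalised by the size of the resonant set. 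A Chebyshev-type inequality then yields $|\Sigma_2(f)|=o(x\log x)$ for all but $o(k^{1-1/\log\log k})$ resonant forms, which combined with the lower bound for $\Sigma_1$ gives $S_f(x) \gg_A x\log x$ for the remaining $\gg k^{1-1/\log\log k}$ Hecke cusp forms.
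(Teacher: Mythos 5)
There is a genuine quantitative gap already in your first step, before the two difficulties you flag. Taking $\eta_p\asymp \log p/\log x\asymp 1/a_p$ does force $\lambda_f(p^a)\geq 0$ for all $a\leq a_p$, but it does \emph{not} make the friable part of $S_f(x)$ comparable to $\Psi(x,y;\tau)$. With $x=(\log k)^A$ and $y$ just below $\log k$, every prime $p\leq y$ has $a_p\asymp A$, so your window only guarantees $\lambda_f(p)\geq 2\cos(c/A)=2-O(A^{-2})$, and the losses compound multiplicatively: for $y$-friable $n\leq x$ one only gets $\lambda_f(n)\geq \tau(n)\prod_{p\mid n}\bigl(1-O(A^{-2})\bigr)$, and the mean value of a multiplicative function whose value at primes is $2-\delta$ with fixed $\delta\asymp A^{-2}>0$ is $\asymp x(\log x)^{1-\delta}=o_A(x\log x)$. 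So on your resonant set the friable sum $\Sigma_1(f)$ need not be $\gg_A x\log x$. To repair this you must shrink the windows with $k$ (the paper's Proposition \ref{BoundMomentsRandom} takes $\theta_p\leq (\log x)^{-2}$, so that $\X(n)=\tau(n)(1+O(1/\log x))$ on friable $n\leq x$); but then the per-prime Sato--Tate mass drops to $\asymp(\log x)^{-6}$, and with $y$ near $\log k$ the joint mass $\exp\bigl(-c\,\pi(y)\log\log x\bigr)$ falls below $k^{-1/\log\log k}$, forcing you to lower $y$ to about $\log k/\log\log k$ --- which is exactly the calibration of Theorem \ref{Omega}. As written, your parameters do not deliver the stated conclusion.

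Beyond this, the two pillars of the argument are asserted rather than proved. First, detecting via Lemma \ref{Petersson} a joint event of probability as small as $\exp(-c\log k/(\log\log k)^2)$ requires explicit majorants and minorants of the box $\prod_{p\leq y}[0,\eta_p]$ built from the characters $\mathrm{tr}\,\mathrm{Sym}^a$, compatible with taking products over $p$ (e.g.\ nonnegative in each variable), with the total modulus kept below $k^2/10^4$ and with controlled coefficient sums; no ``joint vertical Sato--Tate with negligible error'' at this strength exists to cite, and note that your resonant event forces $\lambda_f(n)>0$ for all $n\leq y$, so a routine proof of it would improve the bound $n_f\gg\sqrt{\log k}$ of \cite{KLSW} to $n_f\gg\log k/\log\log k$ for many forms --- a warning that this step cannot be waved through. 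Second, the ``essential orthogonality'' justification for $\Sigma_2$ is false as stated: an integer $n\leq x$ with $P(n)>y$ typically has a large $y$-friable part, so on the resonant set $\Sigma_2$ behaves like $\sum_{b}\X(b)\sum_{a\leq x/b,\,P(a)\leq y}\tau(a)$ (with $b$ ranging over integers free of prime factors $\leq y$), whose conditional mean square is far larger than the unconditional one; the needed bound $o((x\log x)^2)$ comes instead from the decay $\sum_{b>y}b^{-2}\ll 1/y$, and proving it requires inserting the explicit nonnegative majorant, applying the Petersson formula to moduli up to (majorant modulus)$\times x^2$, and evaluating the resulting main term in the random model using $|\X(a)|\leq\tau(a)$ --- none of which is carried out, nor is the required comparability of majorant and minorant masses addressed. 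The paper avoids all of these issues by comparing the $2\ell$-th harmonic moment of $S_f(x)$, with $\ell\asymp\log k/\log x$, to the random model (Proposition \ref{Moments}), bounding the random moment from below by the resonance event purely on the probabilistic side, and finishing with a moment pigeonhole; the price is that it only controls $|S_f(x)|$, whereas your program, if completed along the corrected lines above, would in addition determine the sign.
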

We shall deduce this result from the following theorem.
\begin{thm}\label{Omega}
Let $k$ be a large even integer. Let $A>1$ be fixed, $y=\log k/\log\log k$ and $x=(\log k)^A$. There are at least $k^{1-1/\log\log k}$  Hecke cusp forms $f\in \h$ such that 
$$ \left|\sum_{n\leq x} \lambda_f(n)\right| \geq \Psi(x, y; \tau) \left(1+O\left(\frac{1}{\sqrt{\log\log k}}\right)\right).$$ 
\end{thm}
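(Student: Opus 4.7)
The plan is to find $\gg k^{1-1/\log\log k}$ Hecke cusp forms $f \in \h$ whose Hecke angles $\theta_f(p)$ lie simultaneously in small intervals $[0, \delta_p]$ for every prime $p \le y$. For such $f$, the identity $\lambda_f(p^a) = U_a(\cos\theta_f(p)) = (a+1)(1+O((a+1)^2\theta_f(p)^2))$ (with $U_a$ the Chebyshev polynomial of the second kind) forces $\lambda_f(n)$ to be close to $\tau(n)$ uniformly over $y$-friable $n \le x$, so $\Psi(x,y;\lambda_f) \approx \Psi(x,y;\tau)$. A separate second-moment argument via the Petersson trace formula will show $S_f(x) \approx \Psi(x,y;\lambda_f)$ for almost all $f$; intersecting these two sets of forms will yield the theorem.

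The counting step rests on Petersson's trace formula
$$\sumh_f \lambda_f(m)\lambda_f(n) = \delta_{m=n} + \textup{(Bessel-Kloosterman term)},$$
whose error is $O(k^{-100})$ whenever $mn = k^{o(1)}$, since $J_{k-1}(z) \ll (z/k)^{k-1}$ for $z \ll k$. Combined with the Hecke relation \eqref{Hecke}, any product $\prod_{p\le y} P_p(\lambda_f(p))$ of polynomials in $\lambda_f(p)$ of individual degrees $D_p$ expands as a linear combination of $\lambda_f(m)$ with $m$ at most the multiplicative degree $M := \prod_p p^{D_p}$, and if $M = k^{o(1)}$ the harmonic average satisfies
$$\sumh_f \prod_{p\le y}P_p(\lambda_f(p)) = \prod_{p\le y}\int_0^\pi P_p(2\cos\theta)\,d\mu_{ST}(\theta) + O(k^{-10}),$$
where $\mu_{ST}$ is the Sato-Tate measure. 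Setting $\delta_p := c(\log p/\log x)(\log\log k)^{-1/4}$ and choosing $P_p$ to be a nonnegative polynomial of degree $D_p \asymp 1/\delta_p$ minorizing $\mathbf{1}_{[0,\delta_p]}$ (with $\int P_p\,d\mu_{ST} \asymp \mu_{ST}([0,\delta_p]) \asymp \delta_p^3$), one verifies $\sum_p D_p \log p = O(A\log k/(\log\log k)^{3/4}) = o(\log k)$, so the size constraint is satisfied. After converting from harmonic to natural weights (a $k^{o(1)}$ loss), the count of $f$ with $\theta_f(p) \in [0, \delta_p]$ for all $p \le y$ is at least $\gg k \prod_{p \le y} \delta_p^3$; a prime number theorem calculation gives $\log \prod_p \delta_p^3 = -O(\pi(y)(\log\log\log k + \log A)) = o(\log k/\log\log k)$, so this count exceeds $k^{1-1/\log\log k}$.

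For each such favorable $f$, the uniform bound
$$\Big|\frac{\lambda_f(n)}{\tau(n)} - 1\Big| \ll \sum_{p|n}((a_p+1)\delta_p)^2 \ll (\log\log k)^{-1/2}\Big(\sum_{p|n}\frac{a_p\log p}{\log x}\Big)^2 \le (\log\log k)^{-1/2},$$
valid for every $y$-friable $n \le x$ (using $\sum_{p|n} a_p \log p = \log n \le \log x$), gives $\Psi(x,y;\lambda_f) = \Psi(x,y;\tau)(1+O((\log\log k)^{-1/2}))$. Finally, Petersson delivers the second-moment bound
$$\sumh_f |S_f(x) - \Psi(x,y;\lambda_f)|^2 = \sumh_f \Big|\sum_{\substack{n \le x\\ P(n)>y}}\lambda_f(n)\Big|^2 \ll x,$$
so Markov's inequality excludes only $o(k^{1-1/\log\log k})$ forms from satisfying $|S_f(x) - \Psi(x,y;\lambda_f)| = O(\Psi(x,y;\tau)(\log\log k)^{-1/2})$. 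Intersecting with the favorable set proves the theorem. The chief obstacle is the tight calibration of the scale $\delta_p$: it must be small enough to enforce pointwise closeness $\lambda_f(n) \approx \tau(n)$ across all $y$-friable $n \le x$, yet large enough that both $\prod_p \delta_p^3 \gg k^{-1/\log\log k}$ and the multiplicative degree $\prod_p p^{1/\delta_p}$ remains $k^{o(1)}$ hold simultaneously; the scaling $\delta_p \asymp (\log p/\log x)(\log\log k)^{-1/4}$ is the delicate sweet spot achieving all three constraints.
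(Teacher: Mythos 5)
Your first two steps are sound in outline: detecting forms with $\theta_f(p)\in[0,\delta_p]$ for all $p\le y$ by inserting nonnegative polynomial minorants into the Petersson formula is essentially the paper's conditioning argument transplanted from the random model to the family itself, and your bookkeeping ($\sum_p D_p\log p=o(\log k)$, favorable harmonic measure $\exp(-O(\pi(y)\log\log\log k))$, hence a natural count exceeding $k^{1-1/\log\log k}$, and $\lambda_f(n)=\tau(n)(1+O((\log\log k)^{-1/2}))$ for $y$-friable $n\le x$) is consistent, modulo the technical point that you must actually produce minorants $P_p$ of $\mathbf{1}_{[0,\delta_p]}$ with degree $O(1/\delta_p)$, Sato--Tate mass $\gg\delta_p^3$, and bounded coefficient sums in the basis $\mathrm{tr}\,\mathrm{Sym}^a$, so that the accumulated Petersson error over all $p\le y$ stays $k^{o(1)}k^{-5/6}$. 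The fatal problem is the last step. Your bound $\sumh_f\bigl|\sum_{n\le x,\,P(n)>y}\lambda_f(n)\bigr|^2\ll x$ is correct, but Chebyshev then only bounds the harmonic measure of the exceptional set $\{f:\ |S_f(x)-\Psi(x,y;\lambda_f)|>\Psi(x,y;\tau)(\log\log k)^{-1/2}\}$ by about $x/T^2\asymp 1/(x\log\log k)=\exp(-A\log\log k-\log\log\log k)$, i.e.\ a natural count possibly as large as $k(\log k)^{1-A}$. Your favorable set has harmonic measure only about $\exp(-c\log k\log\log\log k/(\log\log k)^2)$, which is vastly smaller than $(\log k)^{-A}$. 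So Markov does not exclude ``$o(k^{1-1/\log\log k})$ forms'' \emph{relative to your favorable set}: the exceptional set permitted by the second moment could contain every favorable form, and your intersection could be empty. This mismatch of scales is exactly why the paper never tries to locate individual forms with prescribed angles, but instead bounds the $2\ell$-th harmonic moment of $S_f(x)$ with $\ell\asymp\log k/\log x$ from below via the random model (doing the small-angle conditioning inside the probabilistic expectation) and beats the trivial bound on the non-large forms; only a moment of that height makes the complementary contribution exponentially small on the same scale as the sparse lower bound.

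The gap can be repaired within your framework, but it needs an idea your write-up does not contain: you must control the rough sum \emph{on} the favorable set, e.g.\ by estimating the weighted second moment $\sumh_f\bigl(\prod_{p\le y}P_p(\lambda_f(p))\bigr)\bigl|\sum_{n\le x,\,P(n)>y}\lambda_f(n)\bigr|^2$ via Petersson. Writing $n=ab$ with $a$ the $y$-friable part and $b>1$ the $y$-rough part, the minorant and the friable factors involve only primes $\le y$ while $\ex(\X(b)\X(b'))=\mathbf{1}_{b=b'}$ involves primes $>y$, so the main term factors as (measure of the favorable event) times $O\bigl(\sum_{b>y}(x/b)^2(\log x)^2\bigr)=O\bigl(x^2(\log x)^2/y\bigr)$, which with $x=(\log k)^A$, $y=\log k/\log\log k$ is $o\bigl(\Psi(x,y;\tau)^2/\log\log k\bigr)$; Chebyshev applied inside the favorable set then yields the theorem. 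As written, however, the global second moment plus intersection does not prove the statement.
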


The key idea in the proof of Theorem \ref{Omega} is to compare large moments of $S_f(x)$ (as $f$ varies in $\h$) with those of a corresponding probabilistic random model. 
This model was introduced by Cogdell and Michel in \cite{CoMi} to study the complex moments of symmetric power $L$-functions at $s=1$, and was subsequently used by various authors (see for example \cite{LRW} and \cite{LRW2}) to explore similar problems. 
To describe this probabilistic model we consider the compact group $G=SU(2)$ endowed with its natural Haar measure $\mu_G$; we then let $G^{\natural}$ be the set of conjugacy classes of $G$ endowed with the Sato-Tate measure $\mu_{st}$ (i.e. the direct image of $\mu_G$ by the canonical projection). 
Let $\{g^{\natural}_p\}_{p \text{ prime }}$ be a sequence of independent random variables, with values in $G^{\natural}$ and distributed according to the measure $\mu_{st}$.
We construct the sequence of random variables $\{\X(n)\}_{n\geq 1}$ by first defining 
$$
\X(p^a)=\text{tr}\left(\text{Sym}^a \left(g^{\natural}_p\right)\right)
$$ for a prime $p$ and a positive integer $a$, where $\text{Sym}^a$ is the symmetric $a$-th power representation of the standard representation of $GL_2$. We then extend the $\X(p^a)$ multiplicatively by letting $\X(1)=1$ and 
$$
\X(n)= \X(p_1^{a_1})\cdots \X(p_{\ell})^{a_{\ell}}
$$ if the prime factorization of $n$ is $n= p_1^{a_1}\cdots p_{\ell}^{a_{\ell}}.$
%and more generally $\X(n)$ is defined using the Hecke relations
%$$\X(n)\X(m)= \sum_{d\mid (m, n)}\X\left(\frac{mn}{d^2}\right).$$
We shall explore this probabilistic model and the motivation behind it in details in Section 3. 
Using the Petersson trace formula (see Lemma \ref{Petersson} below), we show that in a certain range of $x$, large (weighted) moments of $S_f(x)$ are very close to those of the sum of random variables $\sum_{n\leq x} \X(n)$. We then estimate the moments of this sum by first restricting the random variables $\X(n)$ to those indexed by $y$-friable integers $n$, and then controlling these by restricting the  range of the random variables $\X(p)$ for the primes $p\leq y$.

 Our approach is flexible and could be further generalized to obtain similar results for other families of automorphic forms. In particular, our results hold \emph{mutatis mutandis} for primitive Hecke cusp forms of weight $2$ and prime level $q$ (in the level aspect), with the extra condition that $x<q$ in Theorem \ref{Cancellations}. One should also obtain the analogues of Theorems \ref{Conditional} and \ref{Omega} for Fourier coefficients of the symmetric square and other symmetric power $L$-functions attached to primitive Hecke cusp forms, assuming their automorphy. 

%%%%%%%%%%%%%%%%%%%%%%%%%%%%%%%%%%%%%%%%%%%%%

\section{The size of $S_f(x)$ and the first negative Hecke eigenvalue: Proof of Theorem \ref{Cancellations}}
Let $p$ be a prime number. It follows from \eqref{LFUNCTION} that $\lambda_f(p)=2\cos\theta_f(p)$ and more generally we have 
$$ \lambda_f(p^b)=\frac{\sin((b+1)\theta_f(p))}{\sin\theta_f(p)},$$
for any integer $b\geq 0$, by the Hecke relations \eqref{Hecke}.

Let $\alpha: [0, 1]\to [-2, 2]$ be defined by $\alpha(0)=2$ and $\alpha(t)=2\cos(\pi/(m+1))$ if $1/(m+1)< t\leq 1/m$, for $m\in \mathbb{N}$. For $x\geq 2$, let $h_x$ be the multiplicative function supported on square-free numbers and defined on the primes by 
$$
h_x(p)=\begin{cases} \alpha\left(\frac{\log p}{\log x}\right) & \text{ if } p\leq x,\\ 0 & \text{ otherwise.}\end{cases}
$$
By exploiting the Hecke relations \eqref{Hecke}, we obtain the following lemma which is essentially proved in  \cite{KLSW}.
\begin{lem}\label{PositiveSigns}
Let $f\in \h$. Let $x\geq 2$ be such that $\lambda_f(n)\geq 0$ for all $n\leq x$. Then, we have
$$
\sum_{n\leq x} \lambda_f(n)\geq \sum_{n\leq x} h_x(n).
$$
\end{lem}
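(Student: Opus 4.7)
My plan is to prove the inequality pointwise after restricting the right-hand side to its effective support, then use the non-negativity hypothesis to discard extraneous terms on the left. The function $h_x$ is supported on squarefree integers, and moreover $h_x(p) = 2\cos(\pi/2) = 0$ whenever $\sqrt{x} < p \leq x$ (this being the case $m=1$), so effectively
\[
\sum_{n\leq x} h_x(n)=\sum_{\substack{n\leq x,\; n\text{ sqfree}\\ P(n)\leq \sqrt{x}}}\prod_{p\mid n}h_x(p).
\]
Since $\lambda_f$ is multiplicative and non-negative on $[1,x]$, the key step is to establish the prime-by-prime comparison $\lambda_f(p)\geq h_x(p)$ for every prime $p\leq \sqrt{x}$; taking products then yields $\lambda_f(n)\geq h_x(n)\geq 0$ for each squarefree $n\leq x$ with $P(n)\leq\sqrt{x}$, and summing gives the result after dropping the remaining non-negative terms.

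The crux of the argument is therefore to show, for a prime $p\leq\sqrt{x}$, that if $m$ is the unique integer with $p^m\leq x<p^{m+1}$, then $\theta_f(p)\in[0,\pi/(m+1)]$. This will be deduced from the formula $\lambda_f(p^b)=\sin((b+1)\theta_f(p))/\sin\theta_f(p)$, which is already recorded in the paragraph above the lemma. The hypothesis on non-negative Hecke eigenvalues gives $\sin((b+1)\theta_f(p))\geq 0$ for $b=0,1,\dots,m$. A short induction on $b$ will rule out every interval of the form $[2j\pi/(b+1),(2j+1)\pi/(b+1)]$ with $j\geq 1$: once one knows $\theta_f(p)\leq \pi/b$ from the previous step, the only sub-interval of $[0,\pi/b]$ on which $\sin((b+1)\theta_f(p))\geq 0$ is $[0,\pi/(b+1)]$. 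Hence $\theta_f(p)\leq\pi/(m+1)$, and monotonicity of $\cos$ on $[0,\pi]$ gives $\lambda_f(p)=2\cos\theta_f(p)\geq 2\cos(\pi/(m+1))=h_x(p)$, as desired.

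To finish, I assemble the pieces. For $\sqrt{x}<p\leq x$ the inequality $\lambda_f(p)\geq h_x(p)=0$ is immediate from the hypothesis; for $p>x$, $h_x(p)=0$ and these primes do not enter the comparison on squarefree $n\leq x$ supported on primes $\leq \sqrt{x}$. Multiplicativity then yields $\lambda_f(n)\geq h_x(n)\geq 0$ on that restricted range, so
\[
\sum_{n\leq x}\lambda_f(n)\;\geq\sum_{\substack{n\leq x,\;n\text{ sqfree}\\ P(n)\leq\sqrt{x}}}\lambda_f(n)\;\geq\sum_{\substack{n\leq x,\;n\text{ sqfree}\\ P(n)\leq\sqrt{x}}}h_x(n)=\sum_{n\leq x}h_x(n),
\]
where the first inequality discards non-negative terms. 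The only real obstacle is the Chebyshev positivity lemma in the second paragraph, and even that is a soft one-variable trigonometric argument; the rest is bookkeeping with multiplicativity and with the explicit support of $h_x$.
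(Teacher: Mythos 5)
Your argument is correct and is essentially the paper's own proof: both restrict to squarefree $n$ (using non-negativity to discard the rest) and reduce to the prime-by-prime comparison $\lambda_f(p)\geq h_x(p)$, obtained by showing that $\lambda_f(p^j)\geq 0$ for $1\leq j\leq m$ forces $\theta_f(p)\in[0,\pi/(m+1)]$. Your extra observation that $h_x(p)=0$ for $\sqrt{x}<p\leq x$ and the spelled-out induction on $b$ are just more detailed bookkeeping of the same steps, not a different method.
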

\begin{proof} By our assumption we have
$$
\sum_{n\leq x} \lambda_f(n)\geq \sums_{n\leq x} \lambda_f(n),
$$
where $\sums$ restricts the summation to squarefree integers.  Since $h_x(n)\geq 0$ for all squarefree $n$, it thus  suffices to show that $\lambda_f(p)\geq h_x(p)$ for all primes $p\leq x$. Let $p\leq x$ be a prime number, and $m\geq 1$ be such that $x^{1/(m+1)}< p\leq x^{1/m}$. Then, for all integers $1\leq j\leq m$ we have
$$0\leq \lambda_f(p^j)=\frac{\sin((j+1)\theta_f(p))}{\sin\theta_f(p)}.$$
This implies $0\leq \theta_f(p)\leq \pi/(m+1)$ and hence that
$$\lambda_f(p)=2\cos\theta_f(p)\geq h_x(p),$$
as desired.
\end{proof}
In order to complete the proof of Theorem \ref{Cancellations}, we need to obtain a lower bound for $\sum_{n\leq x} h_x(n)$. We prove the following result.
\begin{pro}\label{LOWERBOUNDH} 
There is an absolute constant $c_0>0$ such that for all large $x$ we have
$$\sum_{n\leq x} h_x(n)\geq c_0x\log x.$$
\end{pro}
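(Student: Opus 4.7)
The plan is to apply a lower bound of Hildebrand (the one referenced in the paper's discussion of Theorem~\ref{Cancellations}) for mean values of non-negative multiplicative functions, and then reduce the problem to a computation of an Euler product. Note that $h_x$ is a non-negative multiplicative function, supported on squarefree integers, with $h_x(p) \in [0,2]$ on primes, $h_x(p) = 0$ for $p > \sqrt{x}$, and $h_x(p^k) = 0$ for $k \geq 2$; in particular $h_x(n) \leq \tau(n)$ for all $n$. Hildebrand's lower bound then yields, with an absolute implied constant,
\[
  \sum_{n \leq x} h_x(n) \gg x \prod_{p \leq x} \left(1 - \frac{1}{p}\right)\left(1 + \frac{h_x(p)}{p}\right),
\]
and the task is reduced to showing this Euler product is $\gg \log x$.

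Taking logarithms, the product equals $\exp\!\big(\sum_{p \leq x}(h_x(p)-1)/p + O(1)\big)$. Since Mertens gives $\sum_{p \leq x} 1/p = \log\log x + O(1)$, it suffices to establish
\[
  \sum_{p \leq x} \frac{h_x(p)}{p} = 2\log\log x + O(1),
\]
matching what the divisor function $\tau(n)$ would yield in place of $h_x(n)$.

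To prove this identity, I would use that $h_x(p) = 0$ for $\sqrt{x} < p \leq x$ and that on each dyadic range $I_m := (x^{1/(m+1)}, x^{1/m}]$ (with $m \geq 2$) the function $h_x(p)$ equals the constant $\alpha_m := 2\cos(\pi/(m+1))$; Mertens gives $\sum_{p \in I_m} 1/p = \log(1+1/m) + O(m/\log x)$. Split the sum at the threshold $m_0 := \lfloor \log\log x \rfloor$: over primes $p \leq x^{1/m_0}$ (where $m_p \geq m_0$, hence $\alpha_{m_p} = 2 + O(1/m_0^2)$) a single application of Mertens yields
\[
  \sum_{p \leq x^{1/m_0}} \frac{h_x(p)}{p} = 2\log\log x - 2\log\log\log x + O(1).
\]
Over the complementary ranges $I_m$ with $2 \leq m < m_0$, decomposing $\alpha_m = 2 - (2-\alpha_m)$ gives a telescoping main term $2\sum_{m=2}^{m_0-1}\log(1+1/m) = 2\log(m_0/2) = 2\log\log\log x + O(1)$, a convergent correction $\sum_m(2-\alpha_m)\log(1+1/m) = O(1)$ (since $2-\alpha_m = O(1/m^2)$ and $\log(1+1/m) = O(1/m)$), and a Mertens error of size $O(m_0^2/\log x) = o(1)$. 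Adding the two regimes, the $\log\log\log x$ terms cancel and one obtains $\sum_{p \leq x} h_x(p)/p = 2\log\log x + O(1)$.

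The main obstacle is identifying and invoking the right form of Hildebrand's lower bound, with an implied constant depending only on a universal structural bound on $f$ (here $f \leq \tau$), since $h_x$ itself varies with $x$. Once that is in hand, the Euler product computation is elementary but needs careful bookkeeping so that the threshold $m_0$ is chosen to make the $\log\log\log x$ contributions from the two regimes cancel exactly.
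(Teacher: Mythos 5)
Your overall strategy is exactly the paper's: invoke Hildebrand's lower bound for non-negative multiplicative functions and reduce everything to the prime-sum estimate $\sum_{p\le x}h_x(p)/p=2\log\log x+O(1)$, whose proof (splitting at $M=[\log\log x]$ and applying Mertens on each range $(x^{1/(m+1)},x^{1/m}]$) is essentially identical to Lemma \ref{SumPrimesH}. The gap is in the step where you invoke Hildebrand. The inequality you state,
$$\sum_{n\le x}h_x(n)\gg x\prod_{p\le x}\left(1-\frac{1}{p}\right)\left(1+\frac{h_x(p)}{p}\right),$$
with an absolute implied constant, is not a theorem for general non-negative multiplicative $g$ supported on squarefrees with $g\le\tau$: take $g(p)=0$ for all $p$, so the left side is $1$ while the right side is $\asymp x/\log x$. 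In the actual statement (Theorem \ref{Hildebrand}) the lower bound carries the extra factor $\sigma\bigl(\exp\bigl(\sum_{z\le p\le x}(1-g(p))^{+}/p\bigr)\bigr)$ with only $\sigma(u)\gg u^{-u}$ guaranteed, so to obtain an absolute constant one must show that the deficiency sum $\sum_{p\le x}(1-h_x(p))^{+}/p$ is $O(1)$. The structural bound $h_x\le\tau$, which you identify as the hypothesis controlling the implied constant, does not deliver this.

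What does deliver it --- and this is the one observation missing from your write-up --- is that $h_x(p)=2\cos(\pi/(m+1))\ge 1$ for every prime $p\le x^{1/2}$ (i.e.\ $m\ge 2$), so $(1-h_x(p))^{+}$ is supported on $x^{1/2}<p\le x$, where $h_x(p)=0$; hence $\sum_{2\le p\le x}(1-h_x(p))^{+}/p=\log 2+O(1/\log x)$. Applying Theorem \ref{Hildebrand} with $K=z=2$ then gives $\sum_{n\le x}h_x(n)\ge\bigl(e^{-2\gamma}\sigma(2)+o(1)\bigr)\frac{x}{\log x}\prod_{p\le x}\bigl(1+\frac{h_x(p)}{p}\bigr)$, and your Mertens computation finishes the proof exactly as in the paper. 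So the defect is localized and easily repaired, but as written the key hypothesis needed to apply Hildebrand with an absolute constant is both unverified and misidentified.
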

Combining this result with Lemma \ref{PositiveSigns} imply Theorem \ref{Cancellations}. In order to prove Proposition \ref{LOWERBOUNDH}, we shall use the following theorem of Hildebrand \cite{Hi} which provides quantitative lower bounds for mean values of certain non-negative multiplicative functions. 
\begin{thm}[Theorem 2 of \cite{Hi}]\label{Hildebrand}
Let $2\leq z\leq x$ be real numbers. Let $g$ be a multiplicative function supported on squarefree numbers, such that $0\leq g(p)\leq K$ for some constant $K\geq1$ and all primes $p$. Then, we have
\begin{align*}
\frac{1}{x}\sum_{n\leq x}g(n )\geq & \frac{e^{-\gamma(K-1)}}{\Gamma(K)}\prod_{p\leq x} \left(1-\frac{1}{p}\right)\left(1+\frac{g(p)}{p}\right)\\
& \times \left\{\sigma\left(\exp\left(\sum_{z\leq p\leq x} \frac{(1-g(p))^{+}}{p}\right)\right)\left(1+O\left(\frac{\log^{\beta} z}{\log^{\beta} x}\right)\right)+O\left(e^{-\left(\frac{\log x}{\log z}\right)^{\beta}}\right)\right\},
\end{align*}
where $\gamma$ is the Euler-Mascheroni constant, $y^+=\max\{y, 0\}$, $\beta>0$ is an absolute constant, and $\sigma(u)$ is a continuously differentiable function of $u\geq 1$ that satisfies $\sigma(u)\gg u^{-u}$. Furthermore, the implicit constants in the $O$-terms depend on $K$ only. 
\end{thm}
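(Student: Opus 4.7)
The plan is to prove Hildebrand's theorem by the Levin–Fainleib method, treating the mean value $S(x):=\sum_{n\le x}g(n)$ via a convolution identity and then comparing it against the solution of a Dickman-type differential--difference equation.

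First I would derive the fundamental functional identity. Since $g$ is supported on squarefrees, writing $\log n = \sum_{p\mid n}\log p$ gives
\[
S(x)\log x \;=\; \sum_{n\le x} g(n)\log(x/n) \;+\; \sum_{p\le x} g(p)\log p \sum_{\substack{m\le x/p\\(m,p)=1}} g(m).
\]
The first sum equals $\int_1^{x} S(x/t)\,dt/t$ by partial summation, while the coprimality condition in the inner second sum costs only $g(p)^2 S(x/p^2)$, which is absorbable. This produces a Levin--Fainleib integral inequality of the shape
\[
S(x)\log x \;\ge\; \int_1^{x} S(x/t)\,dR(t),\qquad dR(t) \;=\; \frac{dt}{t} \;+\; \Bigl(\sum_{p\le t}g(p)\log p\Bigr)\frac{dt}{t\log t},
\]
after a little rearrangement. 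The Mertens-type weight $R(t)$ encodes both the ``trivial'' $dt/t$ component and the $g$-dependent prime contribution.

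Next I would pass to the normalized quantity $T(x):=S(x)/M(x)$, where
\[
M(x):=x\cdot\frac{e^{-\gamma(K-1)}}{\Gamma(K)}\prod_{p\le x}\Bigl(1-\tfrac1p\Bigr)\Bigl(1+\tfrac{g(p)}{p}\Bigr)
\]
is the ``Wirsing-mean'' predicted by the Euler product, with the $e^{-\gamma(K-1)}/\Gamma(K)$ factor being the standard correction (arising, via Selberg--Delange, from the residue of $\zeta(s)^{K-1}$). A short calculation converts the Levin--Fainleib inequality for $S$ into an inequality for $T(x)$ that, after the change of variables $u=\log x/\log z$ and $v=\exp\bigl(\sum_{z\le p\le x}(1-g(p))^{+}/p\bigr)$, reads (schematically)
\[
v\,T(x)\;\ge\;\int_0^{v} T(x/\text{something})\,d\mu(v') \;+\; \text{error}.
\]
At this point I introduce the comparison function $\sigma$ as the continuous solution of the associated differential--difference equation; the bound $\sigma(u)\gg u^{-u}$ follows by iterating the DDE, exactly as for Dickman's $\rho$. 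The key lemma is that the normalized ratio $T(x)/\sigma(v)$ satisfies a recursion whose extremal solution is precisely $\sigma$, so that $T(x)\ge \sigma(v)(1+o(1))$ holds provided we can control the error terms uniformly.

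The main obstacle is the error analysis. Two places demand care. First, replacing $\sum_{\substack{m\le x/p\\(m,p)=1}} g(m)$ by $S(x/p)$ requires the inductive hypothesis to hold at scale $x/p^2$, which is handled by an induction on $x$ together with the fact that $g(p)\le K$ keeps the contribution from $p^2$-tails under control. Second, splitting primes at the threshold $z$ and patching the resulting two regimes is what produces the factor $1+O((\log z/\log x)^{\beta})$ inside the braces, together with the ``exceptional'' remainder $e^{-(\log x/\log z)^{\beta}}$; the exponent $\beta$ ultimately comes from a saddle-point estimate on the DDE solution $\sigma$. Once these two steps are executed with the inductive hypothesis threaded through, the stated lower bound falls out directly.
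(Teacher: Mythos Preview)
The paper does not prove this theorem; it is quoted from Hildebrand \cite{Hi} and invoked as a black box in the proof of Proposition~\ref{LOWERBOUNDH}. There is therefore no proof in the paper to compare your attempt against.

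Regarding your sketch on its own merits: the overall strategy (a Levin--Fainleib type recursion, normalization by the Wirsing mean, comparison with a Dickman-type differential--difference equation) is broadly in the spirit of how such lower bounds are obtained, but what you have written is a plan rather than a proof. The displayed ``inequality'' involving $T(x/\text{something})$ and an unspecified measure $d\mu(v')$ is a placeholder, not an argument; the precise form of the recursion and the construction of the comparison function $\sigma$ are exactly the substance of Hildebrand's work and cannot be left implicit. Your treatment of the error terms is also not justified: you assert that $\beta$ ``ultimately comes from a saddle-point estimate on the DDE solution $\sigma$'' without carrying this out, and the mechanism by which splitting the primes at $z$ simultaneously produces the multiplicative error $1+O((\log z/\log x)^{\beta})$ and the additive remainder $e^{-(\log x/\log z)^{\beta}}$ is left unexplained. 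If you intend to supply an actual proof of Hildebrand's theorem rather than cite it, each of these steps would require substantial additional work.
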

We also need the following lemma.
\begin{lem}\label{SumPrimesH}
Let $x$ be large. Then, we have
$$\sum_{p\leq x} \frac{h_x(p)}{p}= 2\log\log x+O(1).$$
\end{lem}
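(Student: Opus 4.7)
The plan is to partition the primes $p\le x$ into ranges on which $h_x$ is constant. For each such $p$ there is a unique integer $m=m(p)\ge 1$ with $1/(m+1)<\log p/\log x\le 1/m$, equivalently $x^{1/(m+1)}<p\le x^{1/m}$, and on this range $h_x(p)$ equals $c_m:=2\cos(\pi/(m+1))$. Setting $M_0=\lfloor\log x/\log 2\rfloor$, only $m\in\{1,\ldots,M_0\}$ yield ranges containing primes, and since $c_1=2\cos(\pi/2)=0$ the primes with $\sqrt{x}<p\le x$ contribute nothing. It is precisely this vanishing that produces the leading coefficient $2$ rather than $1$: by Mertens' theorem the $1/p$-weighted count of primes in $(x^{1/(m+1)},x^{1/m}]$ is roughly $\log((m+1)/m)$, and summing $c_m\log((m+1)/m)$ should give $2\log M_0+O(1)=2\log\log x+O(1)$.

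To make this rigorous I would apply Abel summation. Writing $t_m=x^{1/m}$ and $S(y)=\sum_{p\le y}1/p$, the sum becomes $\sum_{m=1}^{M_0}c_m\bigl(S(t_m)-S(t_{m+1})\bigr)$. Using $c_1=0$ and $S(t_{M_0+1})=0$ (as $t_{M_0+1}<2$), summation by parts rewrites this as $\sum_{m=2}^{M_0}(c_m-c_{m-1})S(t_m)$. The Taylor expansion $c_m=2-\pi^2/(m+1)^2+O(m^{-4})$ yields $|c_m-c_{m-1}|\ll 1/m^2$ uniformly in $m\ge 2$, together with the telescoping identity $\sum_{m=2}^{M_0}(c_m-c_{m-1})=c_{M_0}-c_1=2+O(1/(\log x)^2)$. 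Inserting Mertens' theorem in the form $S(t_m)=\log\log x-\log m+B+O(m/\log x)$ then splits the expression into three pieces: a main term $(\log\log x+B)(c_{M_0}-c_1)=2\log\log x+O(1)$; a bounded shift $-\sum_{m\ge 2}(c_m-c_{m-1})\log m=O(1)$, which converges absolutely since $\sum\log m/m^2<\infty$; and a Mertens error contribution that I address below.

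The main obstacle, if one is to be named, is the top range $m\asymp M_0$, where $t_m$ is close to $2$ and the individual Mertens error $O(m/\log x)$ fails to be small. The saving comes from the quadratic decay of the discrete derivative $c_m-c_{m-1}$: the weighted error sum is bounded by $(1/\log x)\sum_{m=2}^{M_0}(c_m-c_{m-1})\cdot m\ll (1/\log x)\sum_{m=2}^{M_0}1/m=O(\log\log x/\log x)=o(1)$. Combining these estimates yields $\sum_{p\le x}h_x(p)/p=2\log\log x+O(1)$, as claimed.
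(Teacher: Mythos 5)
Your proof is correct. The starting point is the same as the paper's (partition the primes according to the interval $x^{1/(m+1)}<p\le x^{1/m}$ on which $h_x$ is constant, then invoke Mertens), but you handle the key difficulty --- the accumulation of the Mertens error terms $O(m/\log x)$ over the roughly $\log x/\log 2$ values of $m$ --- in a genuinely different way. The paper introduces a cutoff $M=[\log\log x]$, estimates the range $m\le M$ interval by interval (where the errors total $O(M^2/\log x)=O(1)$), and for $m>M$ replaces $\cos(\pi/(m+1))$ by $1+O(1/M^2)$ so that the tail collapses into a single sum $\sum_{p<x^{1/(M+1)}}1/p$, to which Mertens is applied once. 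You instead perform a single summation by parts in $m$, which transfers the sum onto the discrete derivative $c_m-c_{m-1}\ll 1/m^2$ (in fact $\ll 1/m^3$); the telescoped coefficient $c_{M_0}-c_1=2+O((\log x)^{-2})$ produces the factor $2$, the absolutely convergent sum $\sum(c_m-c_{m-1})\log m$ is $O(1)$, and the Mertens errors are damped to $O(\log\log x/\log x)$ exactly because of the decay of $c_m-c_{m-1}$. Your route avoids choosing an auxiliary parameter and treats all $m$ uniformly, at the cost of the partial-summation bookkeeping (including the correct observations that $c_1=0$ and $S(t_{M_0+1})=0$, both of which you verified); the paper's two-range split is slightly more pedestrian but requires no rearrangement of the sum. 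Either argument yields the stated estimate $2\log\log x+O(1)$.
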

\begin{proof} First, note that
\begin{equation}\label{SUMHX}
\sum_{p\leq x} \frac{h_x(p)}{p} 
= \sum_{1\leq m\leq \frac{\log x}{\log 2}}2\cos\left(\frac{\pi}{m+1}\right) \sum_{x^{1/(m+1)}\leq p<x^{1/m}}\frac{1}{p}.
\end{equation}
Let $M< \sqrt{\log x}$ be a large positive integer to be chosen later. Then, we have
\begin{equation}\label{SUMHX1}
\begin{aligned}
&\sum_{m\leq M} \cos\left(\frac{\pi}{m+1}\right) \sum_{x^{1/(m+1)}\leq p<x^{1/m}}\frac{1}{p}\\
= \ & \sum_{m\leq M}\cos\left(\frac{\pi}{m+1}\right)\left(\log\left(\frac{m+1}{m}\right)+O\left(\frac{1}{\log(x^{1/(m+1)})}\right)\right)\\
= \ & \sum_{m\leq M} \left(\frac{1}{m}+O\left(\frac{1}{m^2}\right)\right) +O\left(\frac{M^2}{\log x}\right) =\log M+ O\left(1\right).
\end{aligned}
\end{equation}
Furthermore, we have
\begin{equation}\label{SUMHX2}
\begin{aligned}
\sum_{M< m\leq \frac{\log x}{\log 2}} \cos\left(\frac{\pi}{m+1}\right) \sum_{x^{1/(m+1)}\leq p<x^{1/m}}\frac{1}{p} 
&= \sum_{M< m\leq \frac{\log x}{\log 2}}\left(1+O\left(\frac{1}{m^2}\right)\right)\sum_{x^{1/(m+1)}\leq p<x^{1/m}}\frac{1}{p}\\
&= \left(1+O\left(\frac{1}{M^2}\right)\right) \sum_{p< x^{1/(M+1)}} \frac{1}{p}\\
& = \log\log x- \log M+ O\left(\frac{\log\log x}{M^2}+1\right).
\end{aligned}
\end{equation}
Choosing $M=[\log\log x]$, and inserting the estimates \eqref{SUMHX1} and \eqref{SUMHX2} in  \eqref{SUMHX} completes the proof.
\end{proof}
\begin{proof}[Proof of Proposition \ref{LOWERBOUNDH}] Note that $h_x(p)< 1$ if and only if $p>x^{1/2}$. Therefore, for all $z\leq x^{1/2}$ we have
$$ 
\sum_{z\leq p\leq x} \frac{(1-h_x(p))^{+}}{p}=\sum_{x^{1/2}<p\leq x} \frac{1}{p}= \log 2+O\left(\frac{1}{\log x}\right).
$$
Thus, choosing $K=z=2$ in Theorem \ref{Hildebrand} we obtain that
$$
 \sum_{n\leq x} h_x(n) \geq \left(e^{-2\gamma} \sigma(2)+o(1)\right)\frac{x}{\log x} \prod_{p\leq x} \left(1+\frac{h_x(p)}{p}\right).
$$
The result follows from Lemma \ref{SumPrimesH}. 
\end{proof}

%%%%%%%%%%%%%%%%%%%%%%%%%%%%%%%%%%%%%%%%%%%%%%%
\section{Large sums of Hecke eigenvalues : proofs of Theorem \ref{Omega} and Corollary \ref{Omega2}}
In order to prove Theorem \ref{Omega}, we shall compute the moments of $S_f(x)$ as $f$ varies in $\h$. When so doing, we shall use the 
\emph{harmonic weights} that arise naturally in the Petersson trace formula (see Lemma \ref{Petersson} below). 
The harmonic weight of $f\in \h$ is defined by
$$ \omega_f= \frac{\Gamma(k-1)}{(4\pi)^{k-1}\langle f,f\rangle}= \frac{2\pi^2}{(k-1) L(1, \text{Sym}^2 f)},$$
where $\langle f,f\rangle$ is the Petersson inner product, and $L(s, \text{Sym}^2 f)$ is the symmetric square $L$-function of $f$. Given a sequence $(\alpha_f)_{f\in \h}$, its harmonic average is defined as the sum
$$ \sumh_{f\in \h}\alpha_f=\sum_{f\in \h} \omega_f \alpha_f,$$
and if $S\subset \h$ we will let $|S|_h$ denote the harmonic measure of $S$, that is
$$ |S|_h:=\sumh_{f\in S}1.$$
Moreover, the classical estimate
\begin{equation}\label{Harmonic1}
 |\h|_h=1+O\left(k^{-5/6}\right),
\end{equation}
 together with the bounds of Goldfeld, Hoffstein and Liemann (see the Appendix of \cite{GHL})
\begin{equation}\label{Harmonic2}
\frac{1}{k \log k}\ll \omega_f\ll \frac{\log k}{k},
\end{equation}
show that the harmonic weight $\omega_f$ is close to the natural weight $1/|\h|$ (since $|\h|\asymp k$), and it defines asymptotically a probability measure on $\h$. 

We shall use the following consequence of the Petersson trace formula which follows from Lemma 2.1 of \cite{RuSo}. 
\begin{lem}\label{Petersson}
Let $k$ be a large even integer, and $n$ be a positive integer such that $n\leq k^2/10^4$. Then, we have
\begin{equation}\label{Petersson2} 
\frac{1}{|\h|_h}\sumh_{f\in \h}\lambda_f(n)=\delta(n)+O\left(k^{-5/6}\right),
\end{equation}
where $\delta(n)=1$ if $n=1$, and is $0$ otherwise.
\end{lem}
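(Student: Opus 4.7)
The plan is to invoke the Petersson trace formula with $m=1$, which converts the harmonic sum $\sumh_{f\in\h}\lambda_f(n)$ into a main term plus a Kloosterman sum weighted by a Bessel function. The hypothesis $n\leq k^2/10^4$ puts us deep inside the regime where the Bessel factor is essentially trivial, so the error will be exponentially small in $k$, and in particular $O(k^{-5/6})$. Dividing by $|\h|_h = 1+O(k^{-5/6})$ then gives the stated form.

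\textbf{Step 1 (Petersson).} In its normalized form, the Petersson trace formula reads
\[
\sumh_{f\in\h}\lambda_f(m)\lambda_f(n) \;=\; \delta(m=n) \;+\; 2\pi i^{-k}\sum_{c\geq 1}\frac{S(m,n;c)}{c}\,J_{k-1}\!\left(\frac{4\pi\sqrt{mn}}{c}\right),
\]
where $S(m,n;c)$ is the classical Kloosterman sum. Specializing to $m=1$ gives
\[
\sumh_{f\in\h}\lambda_f(n) \;=\; \delta(n) \;+\; 2\pi i^{-k}\sum_{c\geq 1}\frac{S(1,n;c)}{c}\,J_{k-1}\!\left(\frac{4\pi\sqrt{n}}{c}\right).
\]

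\textbf{Step 2 (Weil + Bessel bounds).} I would estimate the error via Weil's bound $|S(1,n;c)|\leq \tau(c)\sqrt{(n,c)\,c}$ combined with the standard inequality
\[
J_{k-1}(x) \;\ll\; \frac{1}{\sqrt{k}}\left(\frac{ex}{2(k-1)}\right)^{k-1}, \qquad 0<x\leq k,
\]
which comes from the series definition and Stirling. The hypothesis $n\leq k^2/10^4$ guarantees $4\pi\sqrt{n}/c \leq \pi k/(25c) < k$ for every $c\geq 1$, so the Bessel bound applies uniformly, and substituting yields
\[
\left|2\pi i^{-k}\sum_{c\geq 1}\frac{S(1,n;c)}{c}\,J_{k-1}\!\left(\frac{4\pi\sqrt{n}}{c}\right)\right| \;\ll\; \frac{1}{\sqrt{k}}\left(\frac{2\pi e\sqrt{n}}{k-1}\right)^{k-1} \sum_{c\geq 1}\frac{\tau(c)\sqrt{(n,c)}}{c^{k-1/2}}.
\]
Since $2\pi e\sqrt{n}/(k-1)\leq 2\pi e/99 < 1$, the leading factor decays exponentially in $k$, while the Dirichlet series converges absolutely for $k$ large. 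Hence the entire Kloosterman-Bessel contribution is $O(\rho^{k})$ for some fixed $\rho<1$, which is trivially $O(k^{-5/6})$.

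\textbf{Step 3 (Normalize).} Combining Steps 1--2 with the asymptotic $|\h|_h = 1 + O(k^{-5/6})$ recorded in \eqref{Harmonic1},
\[
\frac{1}{|\h|_h}\sumh_{f\in\h}\lambda_f(n) \;=\; \bigl(\delta(n) + O(k^{-5/6})\bigr)\bigl(1 + O(k^{-5/6})\bigr) \;=\; \delta(n) + O(k^{-5/6}),
\]
as claimed.

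The only point requiring care is the uniformity of the Bessel bound over the full range $c\geq 1$; this is precisely what the generous hypothesis $n\leq k^2/10^4$ secures, by keeping the argument $4\pi\sqrt{n}/c$ well below the order $k-1$ for every $c$. Once that is in hand, the proof is essentially a one-line invocation of Petersson plus Weil, and the $k^{-5/6}$ in the conclusion is governed by the imperfect normalization \eqref{Harmonic1} rather than by the Kloosterman term itself, which is far smaller.
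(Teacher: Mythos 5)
Your argument is correct, and it is in substance the same as the paper's, the only difference being that you supply the details that the paper outsources to a citation. The paper's proof of this lemma is two lines: it quotes Lemma 2.1 of Rudnick--Soundararajan, which states precisely that $\sumh_{f\in \h}\lambda_f(n)=\delta(n)+O(e^{-k})$ for $n\leq k^2/10^4$, and then divides by $|\h|_h=1+O(k^{-5/6})$, exactly as in your Step 3. Your Steps 1--2 reconstruct the content of that cited lemma from scratch: the Petersson formula with $m=1$, Weil's bound (your version with $\sqrt{(n,c)}$ is weaker than the sharp $(m,n,c)^{1/2}$ but still a valid upper bound, and with $m=1$ the gcd factor is $1$ anyway), and the bound $|J_{k-1}(x)|\leq (x/2)^{k-1}/(k-1)!\ll k^{-1/2}\bigl(ex/(2(k-1))\bigr)^{k-1}$, which with $\sqrt{n}\leq k/100$ makes the off-diagonal term exponentially small, matching the $O(e^{-k})$ of the quoted lemma. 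You also correctly identify that the $k^{-5/6}$ in the statement is governed by the harmonic normalization \eqref{Harmonic1} rather than by the Kloosterman--Bessel term. So the proposal is a self-contained version of the paper's citation-based proof; it buys independence from the reference at the cost of a few extra lines, and loses nothing in the range of $n$.
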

\begin{proof}
It follows from Lemma Lemma 2.1 of \cite{RuSo} that
$$\sumh_{f\in \h}\lambda_f(n)=\delta(n)+O\left(e^{-k}\right).$$
The result follows from combining this estimate with \eqref{Harmonic1}.
\end{proof}
%, where $\X(n)$ is a probabilistic random model for the Hecke eigenvalues $\lambda_f(n)$. 

The formula \eqref{Petersson2} can be interpreted as follows: Recall that $G^{\natural}$ is the set of conjugacy classes of $G=SU(2)$ endowed with the Sato-Tate measure $\mu_{st}$ (the direct image of the Haar measure $\mu_G$ by the canonical projection). Let $n>1$ and $n=p_1^{a_1}\cdots p_{\ell}^{a_{\ell}}$ be its prime factorization. Then we have the identity
\begin{equation}\label{LambdaTrace}
 \lambda_f(n)= \lambda_f(p_1^{a_1})\cdots  \lambda_f(p_{\ell}^{a_{\ell}})= \text{tr}\left(\text{Sym}^{a_1} \left(g_f(p_1)\right)\right) \cdots \text{tr}\left(\text{Sym}^{a_{\ell}} \left(g_f(p_{\ell})\right)\right),
\end{equation}
where 
$$ g_f(p) =\left(\begin{matrix} e^{i \theta_f(p)}&0\\0&e^{-i \theta_f(p)}\end{matrix}\right).$$ 
Fix now the primes $p_1, \dots, p_{\ell}$. By the identity \eqref{LambdaTrace} together with the Peter-Weyl Theorem and Weyl's equidistribution criterion, the estimate \eqref{Petersson2} applied to integers $n$ divisible only by the primes in $\{p_1, \dots, p_{\ell}\}$ yields the equidistribution of the $\ell$-tuple of conjugacy classes $\{g^{\natural}_f(p_1), \dots, g^{\natural}_f(p_{\ell}) \}_{f\in \h}$ (appropriately weighted by $\omega_f$) into the product of $\ell$ copies of $G^{\natural}$, as $k\to\infty$. Based on this equidistribution result, we construct a probabilistic random model  for the Hecke eigenvalues $\lambda_f(n)$ as follows: let $\{g^{\natural}_p\}_{p \text{ prime }}$ be a sequence of independent random variables, with values in $G^{\natural}$ and distributed according to the measure $\mu_{st}$. We define $\X(1)=1$ and for $n>1$
$$
\X(n)=  \text{tr}\left(\text{Sym}^{a_1} (g^{\natural}_{p_1})\right) \cdots \text{tr}\left(\text{Sym}^{a_{\ell}} (g^{\natural}_{p_{\ell}})\right),
$$
if $n=p_1^{a_1}\cdots p_{\ell}^{a_{\ell}}$ is the prime factorization of $n$. Furthermore, one can easily check that the $\X(n)$ satisfy the Hecke relations \eqref{Hecke}, namely that
$$
 \X(m)\X(n)=\sum_{d|(m, n)}\X\left(\frac{mn}{d^2}\right).
 $$
We prove the following lemma. 
\begin{lem}\label{ExpX} Let $n$ be a positive integer. Then we have
$$ 
\ex(\X(n))= \delta(n).
$$
%\begin{cases} 1 &\text{ if } n=1,\\ 0 &\text{ otherwise}.\end{cases}
\end{lem}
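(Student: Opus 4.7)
The approach is to exploit the independence of the family $\{g^{\natural}_p\}_p$ together with the vanishing of the mean of the character of any non-trivial irreducible representation of $SU(2)$ against the Sato--Tate measure. The case $n=1$ is immediate since $\X(1)=1$ by definition, so $\ex(\X(1))=1=\delta(1)$.

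For $n>1$, write the prime factorization $n=p_1^{a_1}\cdots p_\ell^{a_\ell}$ with every $a_i\geq 1$. Since the random variables $g^{\natural}_{p_1},\dots,g^{\natural}_{p_\ell}$ are independent, the expectation factorizes as
$$\ex(\X(n))=\prod_{i=1}^{\ell}\ex\bigl(\text{tr}(\text{Sym}^{a_i}(g^{\natural}_{p_i}))\bigr),$$
so the whole statement reduces to showing $\ex(\text{tr}(\text{Sym}^{a}(g^{\natural}_p)))=0$ for every prime $p$ and every $a\geq 1$.

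I would prove this last identity by a direct computation using the explicit form of the Sato--Tate measure. Identifying $G^{\natural}$ with $[0,\pi]$ via the eigenvalue $e^{i\theta}$, the measure $\mu_{st}$ becomes $(2/\pi)\sin^2\theta\,d\theta$ (Weyl's integration formula for $SU(2)$), and the character $\text{tr}(\text{Sym}^a(\cdot))$ evaluated at the conjugacy class labelled by $\theta$ equals the Chebyshev-type polynomial $\sin((a+1)\theta)/\sin\theta$, exactly as in the formula for $\lambda_f(p^b)$ recalled at the beginning of Section~2. Therefore
$$\ex\bigl(\text{tr}(\text{Sym}^{a}(g^{\natural}_p))\bigr)=\frac{2}{\pi}\int_0^{\pi}\sin\bigl((a+1)\theta\bigr)\sin\theta\,d\theta=\frac{1}{\pi}\int_0^{\pi}\bigl(\cos(a\theta)-\cos((a+2)\theta)\bigr)d\theta=0,$$
since both $a$ and $a+2$ are nonzero integers. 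A more conceptual rephrasing of the last step is Schur orthogonality: for $a\geq 1$ the representation $\text{Sym}^a$ is a non-trivial irreducible representation of $SU(2)$, so its character is orthogonal in $L^2(G,\mu_G)$ to the trivial character, an orthogonality that descends to $(G^{\natural},\mu_{st})$ by definition of the push-forward measure.

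There is no substantive obstacle in this proof; the only thing worth stating carefully is the identification of $(G^{\natural},\mu_{st})$ with $([0,\pi],(2/\pi)\sin^2\theta\,d\theta)$, which is exactly the ingredient that motivated the construction of the probabilistic model in the first place, so it may simply be invoked.
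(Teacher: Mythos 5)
Your proof is correct and follows essentially the same route as the paper: independence of the $g^{\natural}_p$ to factor the expectation, the identification of $(G^{\natural},\mu_{st})$ with $([0,\pi],\tfrac{2}{\pi}\sin^2\theta\,d\theta)$, and the character formula $\text{tr}(\text{Sym}^a(g^{\natural}(\theta)))=\sin((a+1)\theta)/\sin\theta$. The only cosmetic difference is that you evaluate the resulting integral directly via a product-to-sum identity, whereas the paper invokes the orthonormality of the functions $Y_n(t)=\sin((n+1)t)/\sin t$ in $L^2([0,\pi],d\mu_{st})$; both finish the same computation.
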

\begin{proof} 
Let $n>1$, and write the prime factorization of $n$ as $n=p_1^{a_1}\cdots p_{\ell}^{a_{\ell}}$. First, by the independence of the random variables $g^{\natural}_p$ for different primes $p$, we have
$$ \ex(\X(n))= \prod_{j=1}^{\ell} \ex\left(\text{tr}\left(\text{Sym}^{a_j} (g^{\natural}_{p_j})\right)\right).$$
By Weyl's integration formula, the map
$$ \theta\rightarrow g^{\natural}(\theta)= \left(\begin{matrix} e^{i \theta}&0\\0&e^{-i \theta}\end{matrix}\right)^{\natural},$$
identifies $G^{\natural}$ with the interval $[0,\pi]$ and $\mu_{st}$ with the distribution
$d\mu_{st}(t):=\frac{2}{\pi}\sin^2(t)dt$. Furthermore, note that 
$$\text{Sym}^{a}\left(g^{\natural}(\theta)\right)= \left(\begin{matrix} e^{i a\theta}\\ &e^{i(a-2)\theta}\\ &&\ddots\\ &&& e^{-ia\theta}\end{matrix}\right)^{\natural},
$$
and hence
$$ \text{tr}\left(\text{Sym}^{a} \left(g^{\natural}(\theta)\right)\right)=  \sum_{j=0}^a e^{i (a-2j)\theta}= \frac{\sin((a+1)\theta)}{\sin \theta}.
$$Therefore, for a prime $p$ and a positive integer $a$ we obtain
$$ \ex(\X(p^a))= \frac{2}{\pi} \int_{0}^{\pi}  \frac{\sin((a+1)\theta)}{\sin \theta} \sin^2\theta d\theta=0,$$
since the functions $\{Y_n\}_{n\geq 0}$, defined by
$$Y_n(t):=\frac{\sin((n+1)t)}{\sin t}$$
form an orthonormal basis of $L^2([0,\pi],d\mu_{st})$. This completes the proof.\end{proof}

%Let $\{g^{\natural}_p\}_{p \text{ prime }}$ be a sequence of independent random variables, with values in $G^{\natural}$ and distributed according to the measure $\mu_{st}$.To construct this model, we let $\{\theta_p\}_{p \text{ prime}}$ be independent random variables distributed on $[0,\pi]$ according to the Sato-Tate measure $d\mu_{ST}:=\frac{2}{\pi}\sin^2(t) dt$ on $[0,\pi]$. For primes $p$ and positive integers $a$, we define$$\X(p^a):=\frac{\sin((a+1)\theta_p)}{\sin\theta_p}.$$We then extend the definition of $\X(n)$ multiplicatively by setting $\X(1):=1$ and$$\X(n):= \X(p_1^{a_1})\cdots \X(p_{\ell}^{a_{\ell}}), \text { if } n= p_1^{a_1}\cdots p_{\ell}^{a_{\ell}}.$$
Using Lemmas \ref{Petersson} and \ref{ExpX} we prove that in a certain range of $x$, the harmonic moments of $S_f(x)$ (as $f$ varies in $\h$) are very close to the moments of the sum of random variables $\sum_{n\leq x} \X(n)$.
\begin{pro}\label{Moments}
Let $k$ be a large even integer. Let $x\geq 2$ and $\ell$ be a positive integer such that $x^{6\ell}\leq k.$ Then, we have
$$ \frac{1}{|\h|_h}\sumh_{f\in \h}\left|\sum_{n\leq x} \lambda_f(n)\right|^{2\ell}= \ex\left(\left|\sum_{n\leq x}\X(n)\right|^{2\ell}\right) +O\left(k^{-1/3}\right).
$$
\end{pro}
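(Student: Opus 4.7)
The plan is to expand both moments via the Hecke multiplication relations and compare them term by term using the Petersson trace formula. Since $\lambda_f(n)$ and $\X(n)$ are real-valued, we have
$$|S_f(x)|^{2\ell}=\sum_{\mathbf{n}}\lambda_f(n_1)\cdots\lambda_f(n_{2\ell}),\qquad\Bigl|\sum_{n\leq x}\X(n)\Bigr|^{2\ell}=\sum_{\mathbf{n}}\X(n_1)\cdots \X(n_{2\ell}),$$
with $\mathbf{n}=(n_1,\dots,n_{2\ell})$ ranging over $2\ell$-tuples of positive integers $\leq x$. Iterating \eqref{Hecke} produces non-negative integer structure constants $a_{\mathbf{n}}(m)$ (independent of $f$) such that
$$T_{n_1}\cdots T_{n_{2\ell}}=\sum_{m\leq n_1\cdots n_{2\ell}}a_{\mathbf{n}}(m)\,T_m$$
in the abstract Hecke algebra. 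Since both $\lambda_f$ and $\X$ satisfy \eqref{Hecke}, the same expansion holds with these as characters, and the bound $m\leq x^{2\ell}\leq k^{1/3}\leq k^2/10^4$ ensures Lemma \ref{Petersson} applies uniformly to every $m$.

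Applying Lemma \ref{Petersson} on the arithmetic side and Lemma \ref{ExpX} on the probabilistic side gives $\frac{1}{|\h|_h}\sumh_f \lambda_f(m)=\delta(m)+O(k^{-5/6})$ and $\ex(\X(m))=\delta(m)$. Summing against $a_{\mathbf{n}}(m)$ and then over $\mathbf{n}$, the main terms $\sum_{\mathbf{n}} a_{\mathbf{n}}(1)$ on the two sides coincide, and the total discrepancy is
$$O\Bigl(k^{-5/6}\sum_{\mathbf{n}}\sum_m a_{\mathbf{n}}(m)\Bigr).$$
The key observation for bounding the total mass is that the divisor function $\tau$ also satisfies \eqref{Hecke}: a short verification on prime powers gives $\tau(p^i)\tau(p^j)=\sum_{c=0}^{\min(i,j)}\tau(p^{i+j-2c})$. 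Evaluating the same algebraic identity at $\tau$ yields $\tau(n_1)\cdots\tau(n_{2\ell})=\sum_m a_{\mathbf{n}}(m)\tau(m)\geq\sum_m a_{\mathbf{n}}(m)$, so
$$\sum_{\mathbf{n}}\sum_m a_{\mathbf{n}}(m)\leq\Bigl(\sum_{n\leq x}\tau(n)\Bigr)^{2\ell}\ll(x\log x)^{2\ell}\leq x^{3\ell}\leq k^{1/2},$$
where the penultimate inequality uses $\log x\leq x^{1/2}$ and the last one the hypothesis $x^{6\ell}\leq k$. Combining these, the total error is $O(k^{-5/6+1/2})=O(k^{-1/3})$, as required.

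The main obstacle I expect is verifying rigorously that the same structure constants $a_{\mathbf{n}}(m)$ govern all three expansions (for $\lambda_f$, $\X$, and $\tau$). While this is morally immediate from the shared Hecke-multiplicativity of the three functions, formally it requires either a clean invocation of the abstract Hecke algebra or an inductive unwinding on $2\ell$ of the iteration of \eqref{Hecke}; both are technical but routine. Once this is in place, the Petersson application and the $\tau$-based mass bound proceed as outlined above.
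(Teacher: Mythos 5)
Your proof is correct and follows essentially the same route as the paper: expand the $2\ell$-th power via the iterated Hecke relations into coefficients $b_m(n_1,\dots,n_{2\ell})$, apply Lemma \ref{Petersson} to each resulting $\lambda_f(m)$ (valid since $m\le x^{2\ell}\le k^{1/3}$), and identify the main term $\sum_{\mathbf{n}} b_1(n_1,\dots,n_{2\ell})$ with the random-model moment via $\ex(\X(m))=\delta(m)$. The only step where you diverge is the bound on the total mass $\sum_m b_m(n_1,\dots,n_{2\ell})$: the paper uses $b_m\le\tau(n_1)\cdots\tau(n_{2\ell})$ together with the divisor count $\tau(n_1\cdots n_{2\ell})\ll_\varepsilon x^{2\ell\varepsilon}$, whereas your observation that $\tau$ itself satisfies the Hecke relations (equivalently, evaluating the character identity at the identity, where $\dim\mathrm{Sym}^a=a+1$) gives $\sum_m b_m(n_1,\dots,n_{2\ell})\le\sum_m b_m(n_1,\dots,n_{2\ell})\tau(m)=\tau(n_1)\cdots\tau(n_{2\ell})$, which is slightly sharper and dispenses with the $x^{2\ell\varepsilon}$ factor.
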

In order to prove this proposition, we need to understand the combinatorics of the Hecke relations \eqref{Hecke}. These relations can be written as 
$$ \lambda_f(n_1)\lambda_f(n_2)=\sum_{m|n_1n_2} b_m(n_1, n_2)\lambda_f\left(m\right),$$
where $b_m(n_1, n_2)= 1$ if $m=n_1n_2/d^2$ for some $d|(n_1, n_2)$, and equals $0$ otherwise. More generally, one can write 
\begin{equation}\label{Hecke2}
\lambda_f(n_1)\cdots \lambda_f(n_r)=\sum_{m|\prod_{j=1}^rn_j}b_m(n_1,\dots,n_r)\lambda_f(m),
 \end{equation}
 for some integers $b_m(n_1,\dots,n_r)$. These coefficients have a nice interpretation in terms of the representation theory of $G=SU(2)$. The irreducible characters of $G$ are %$\{\chi_a\}_{a\geq 1}$ where $\chi_a: G\to \mathbb{C}^*$ is defined by 
 $$g \to \text{tr}\left(\text{Sym}^{a} (g)\right),$$
 for $a\geq 0$.
 Hence, for $n=p_1^{a_1}\cdots p_{\ell}^{a_{\ell}}$, the character
 $$\chi_n(g_{p_1}, \dots, g_{p_{\ell}})= \text{tr}\left(\text{Sym}^{a_1} \left(g_{p_1}\right)\right)\cdots\text{tr}\left(\text{Sym}^{a_{\ell}} \left(g_{p_{\ell}}\right)\right)
 $$
  is an irreducible character of the product of $\ell$ copies of $G$, and the formula
 $$ \chi_{n_1}\cdots \chi_{n_r}= \sum_{m|\prod_{j=1}^rn_j}b_m(n_1,\dots,n_r) \chi_m$$
 is the decomposition formula for the product of the $r$
characters $\chi_{n_1},\dots, \chi_{n_r}$ in terms of the irreducibles $\chi_m$. In particular, the coefficients $b_m(n_1,\dots,n_r)$ are non-negative, and we also have 
\begin{equation}\label{RandomHECKE}
\X(n_1)\cdots \X(n_r)= \sum_{m|\prod_{j=1}^rn_j}b_m(n_1,\dots,n_r)\X(m).
\end{equation}
%In order to prove this proposition, we need to understand the combinatorics of the Hecke relations \eqref{Hecke}. In \cite{RuSo}, Rudnick and Soundararajan  introduced a ring $\mathcal{R}$ generated over the positive integers by symbols $x(n)_{n\geq 1}$ which satisfy the following
%$$ x(1)=1, \text{ and } x(m)x(n)=\sum_{d|(m,n)} x\left(\frac{mn}{d^2}\right).$$
Moreover, one can easily prove (either by induction on $r$ or by exploiting the representation theory of $SU(2)$)  that
%and is symmetric in the variables $n_1,\dots, n_r$; 
\begin{equation}\label{BoundBM}
b_m(n_1,\dots, n_r)\leq \tau(n_1)\cdots \tau(n_r).
\end{equation}
%The case $m=1$ is of special importance. 
%Here Rudnick and Soundararajan noticed that $b_1$ satisfies a multiplicative property:
%\begin{equation}
%b_1(m_1n_1,\dots,m_rn_r)=b_1(m_1,\dots,m_r)b_1(n_1,\dots,n_r),
%\end{equation}
%if $(\prod_{i=1}^rm_i,\prod_{i=1}^rn_i)=1$; and that for a prime $p$ we have $b_1(p^{a_1},\dots,p^{a_r})=0$ if $a_1+\cdots + a_r$ is odd.

%Before proving Proposition \ref{Moments} we need some preliminaries on the random variables $\X(n)$. 

\begin{lem}\label{ExpectationRandom}
Let $g$ be a real-valued arithmetic function. For all $x\geq 2$ and positive integers $\ell$ we have
$$ \ex\left(\left|\sum_{n\leq x}\X(n)g(n)\right|^{2\ell}\right)= \sum_{n_1, \dots, n_{2\ell}\le x} b_1(n_1, \dots, n_{2\ell})g(n_1)g(n_2)\cdots g(n_{2\ell}).$$
\end{lem}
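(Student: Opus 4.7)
The plan is to expand the $2\ell$-th power and apply \eqref{RandomHECKE} together with Lemma \ref{ExpX}. First I would observe that since $g$ is real-valued and each $\X(n)$ is real-valued (because $\X(n)$ is a product of traces of matrices in $SU(2)$, hence of values of the real-valued functions $Y_a$ from Lemma \ref{ExpX}), the random variable $\sum_{n\leq x}\X(n)g(n)$ is itself real-valued. Consequently the absolute value can be dropped and
\[
\left|\sum_{n\leq x}\X(n)g(n)\right|^{2\ell}=\sum_{n_1,\dots,n_{2\ell}\leq x}g(n_1)\cdots g(n_{2\ell})\,\X(n_1)\cdots \X(n_{2\ell}).
\]

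Next I would apply the Hecke-style decomposition \eqref{RandomHECKE} to the product of $2\ell$ random variables:
\[
\X(n_1)\cdots \X(n_{2\ell})=\sum_{m\mid n_1\cdots n_{2\ell}} b_m(n_1,\dots,n_{2\ell})\,\X(m).
\]
Taking expectations, using linearity and Lemma \ref{ExpX} which gives $\ex(\X(m))=\delta(m)$, only the single term $m=1$ contributes, producing exactly $b_1(n_1,\dots,n_{2\ell})$. Substituting back into the expanded moment and interchanging the (finite) sums yields the claimed identity.

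There is no serious obstacle here; the only point requiring a small bit of care is the justification that the inner sum can be swapped with the expectation, which is immediate since all sums over $n_j\leq x$ are finite. The lemma is really just the statement that in the full expansion of the moment, the non-trivial Hecke fusion terms are killed by the vanishing of $\ex(\X(m))$ for $m>1$, leaving only the ``diagonal'' contribution recorded by $b_1(n_1,\dots,n_{2\ell})$.
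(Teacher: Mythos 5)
Your proposal is correct and follows essentially the same route as the paper: expand the $2\ell$-th power, use \eqref{RandomHECKE} to write $\X(n_1)\cdots\X(n_{2\ell})$ as $\sum_m b_m(n_1,\dots,n_{2\ell})\X(m)$, and apply Lemma \ref{ExpX} so that only $m=1$ survives. Your explicit remarks that the random variables are real-valued (so the absolute value can be dropped) and that the finite sums may be interchanged with the expectation are details the paper leaves implicit, but the argument is the same.
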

\begin{proof}
We have 
\begin{align*}
\ex\left(\left|\sum_{n\leq x}\X(n)g(n)\right|^{2\ell}\right)
&= \ex\left( \sum_{n_1, \dots, n_{2\ell}\le x} \X(n_1)\X(n_2) \cdots \X(n_{2\ell})g(n_1)g(n_2)\cdots g(n_{2\ell})\right)\\
&= \sum_{n_1, \dots, n_{2\ell}\le x}g(n_1)g(n_2)\cdots g(n_{2\ell}) \ex\big(\X(n_1)\X(n_2) \cdots \X(n_{2\ell})\big).
\end{align*}
Moreover, it follows from \eqref{RandomHECKE} that 
$$ \ex\big(\X(n_1)\X(n_2) \cdots \X(n_{2\ell})\big)= \sum_{m | n_1n_2\cdots n_{2\ell}}b_m(n_1, n_2, \dots, n_{2\ell}) \ex(\X(m))=b_1(n_1, n_2, \dots, n_{2\ell}),$$
by Lemma \ref{ExpX}. This completes the proof.
\end{proof}
We deduce the following corollary.
\begin{cor}\label{CompareArithmetic}
Let $g$ and $h$ be arithmetic functions such that $g(n)\geq h(n)\geq 0$ for all $n\geq 1$. Then we have
$$ 
\ex\left(\left|\sum_{n\leq x}\X(n)g(n)\right|^{2\ell}\right)\geq  \ex\left(\left|\sum_{n\leq x}\X(n)h(n)\right|^{2\ell}\right).
$$
\end{cor}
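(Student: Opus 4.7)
The plan is to reduce the inequality to a termwise comparison of the explicit sum provided by Lemma \ref{ExpectationRandom}. Applying that lemma to both $g$ and $h$, the claim becomes
$$\sum_{n_1,\dots,n_{2\ell}\leq x} b_1(n_1,\dots,n_{2\ell})\bigl(g(n_1)\cdots g(n_{2\ell})-h(n_1)\cdots h(n_{2\ell})\bigr)\geq 0.$$

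First I would note that the coefficients $b_1(n_1,\dots,n_{2\ell})$ are non-negative, which is already recorded in the paragraph following \eqref{RandomHECKE} as a consequence of the decomposition of products of irreducible characters of $SU(2)$ into irreducibles (alternatively, by \eqref{BoundBM} together with the fact that they are integers arising as multiplicities). Second, since $g(n)\geq h(n)\geq 0$ for all $n$, each product satisfies $g(n_1)\cdots g(n_{2\ell})\geq h(n_1)\cdots h(n_{2\ell})\geq 0$. Multiplying and summing yields the desired inequality.

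There is no real obstacle: the corollary is an immediate consequence of the expansion in Lemma \ref{ExpectationRandom} together with the positivity of the Hecke multiplicities $b_m$. The only thing worth emphasizing is that the absolute value in the statement plays no role here because the explicit formula from Lemma \ref{ExpectationRandom} already gives a non-negative sum, so the left- and right-hand sides are actual (non-negative) polynomial expressions in the values of $g$ and $h$, and monotonicity is coordinatewise.
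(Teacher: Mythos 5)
Your argument is correct and is exactly the deduction the paper intends: expand both sides via Lemma \ref{ExpectationRandom} and compare termwise, using the non-negativity of the coefficients $b_1(n_1,\dots,n_{2\ell})$ (from the character-decomposition interpretation) and the pointwise inequality $g(n_1)\cdots g(n_{2\ell})\geq h(n_1)\cdots h(n_{2\ell})\geq 0$. Your remark that the absolute value is immaterial (the sums are real and the exponent is even, so the expansion applies directly) is also consistent with how the paper treats it.
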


We are now ready to prove Proposition \ref{Moments}. 

\begin{proof}[Proof of Proposition \ref{Moments}] By \eqref{Hecke2} we obtain\begin{align*}
\frac{1}{|\h|_h}\sumh_{f\in \h}\left|\sum_{n\leq x} \lambda_f(n)\right|^{2\ell}
&=\frac{1}{|\h|_h}\sumh_{f\in \h}\sum_{n_1, \dots, n_{2\ell}\le x} \lambda_f(n_1) \cdots \lambda_f(n_{2\ell})\\
&= \sum_{n_1, \dots, n_{2\ell}\le x} \sum_{m | n_1\cdots n_{2\ell}} b_m(n_1, \dots, n_{2\ell}) \frac{1}{|\h|_h}\sumh_{f\in \h} \lambda_f(m).
\end{align*}
Therefore, by Lemma \ref{Petersson} we get
\begin{align*}
\frac{1}{|\h|_h}\sumh_{f\in \h}\left|\sum_{n\leq x} \lambda_f(n)\right|^{2\ell}= & 
\sum_{n_1, \dots, n_{2\ell}\le x}  b_1(n_1, \dots, n_{2\ell})\\
& \ \ \  +O\left(k^{-5/6}\sum_{n_1, \dots, n_{2\ell}\le x} \sum_{m | n_1\cdots n_{2\ell}}b_m(n_1, \dots, n_{2\ell})\right).
\end{align*}
Now, using that $b_m(n_1, \dots, n_{2\ell})\leq \tau(n_1)\cdots \tau(n_{2\ell})$ we deduce that the error term above is 
$$ \ll k^{-5/6}\sum_{n_1, \dots, n_{2\ell}\le x} \tau(n_1)\cdots \tau(n_{2\ell}) \tau(n_1 \cdots n_{2\ell})\ll_{\varepsilon} x^{2\ell\varepsilon} k^{-5/6} \left(\sum_{n\leq x} \tau(n)\right)^{2\ell} \ll k^{-1/3}.$$
using the bound $\tau(n_1\cdots n_{2\ell})\ll_{\varepsilon} (n_1\cdots n_{2\ell})^{\varepsilon}\leq x^{2\ell\varepsilon}$ together with the estimate $\sum_{n\leq x} \tau(n)\ll x\log x$. Appealing to Lemma \ref{ExpectationRandom} completes the proof.
\end{proof}

To complete the proof of Theorem \ref{Omega} we need to derive lower bounds for the moments of $\sum_{n\leq x} \X(n)$. We establish the following proposition. 

\begin{pro}\label{BoundMomentsRandom}
Let $\ell\geq 2$ be an integer. Then, for all real numbers $2\leq y\leq x$ we have
$$
\ex\left(\left|\sum_{n\leq x}\X(n)\right|^{2\ell}\right) \geq \Psi(x,y;\tau)^{2\ell} \exp\left(-10\frac{y\log\log x}{\log y}+O\left(\frac{\ell}{\log x}\right)\right).
$$
\end{pro}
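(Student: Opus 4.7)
The plan is to combine Corollary~\ref{CompareArithmetic}, which lets us restrict the summation to $y$-friable integers, with a concentration argument that evaluates the reduced moment on a small event where each $\X(n)$ is forced to stay close to $\tau(n)$. Applying Corollary~\ref{CompareArithmetic} with $g$ the characteristic function of $\{n : n \le x\}$ and $h$ the characteristic function of the $y$-friable integers $\le x$, we get
$$ \ex\left(\left|\sum_{n\leq x}\X(n)\right|^{2\ell}\right) \ge \ex\left(\left|\sum_{\substack{n\leq x\\ P(n)\le y}}\X(n)\right|^{2\ell}\right),$$
so it suffices to bound the right-hand side.

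Next I would set $\epsilon = (\log x)^{-3/2}$ and let $E$ be the event that $\theta_p \in [0,\epsilon]$ for every prime $p \le y$. By independence of the $g_p^{\natural}$ together with the Sato-Tate density $d\mu_{st}(t) = \frac{2}{\pi}\sin^2 t\,dt$,
$$\Pr(E) = \prod_{p\le y}\int_0^\epsilon \frac{2}{\pi}\sin^2 t\,dt = \prod_{p \le y}\frac{2\epsilon^3}{3\pi}\bigl(1+O(\epsilon^2)\bigr),$$
so $\log \Pr(E) = -3\pi(y)\log(1/\epsilon) + O(\pi(y)) = -\tfrac{9}{2}\pi(y)\log\log x + O(\pi(y))$; invoking Chebyshev's estimate $\pi(y) \ll y/\log y$ shows $\log \Pr(E) \ge -10\, y\log\log x/\log y$ in the regime where the conclusion is nontrivial.

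On $E$, combining the identity $\X(p^a) = \sum_{j=0}^a \cos((a-2j)\theta_p)$ with $\cos u \ge 1 - u^2/2$ and the evaluation $\sum_{j=0}^a (a-2j)^2 = a(a+1)(a+2)/3$ yields
$$ \X(p^a) \ge (a+1) - \frac{a(a+1)(a+2)}{6}\theta_p^2 \ge (a+1)\left(1 - \frac{(a+1)^2 \epsilon^2}{6}\right).$$
Taking the product over $p \mid n$ for a $y$-friable $n \le x$ and using the crude bound $\sum_{p \mid n}(a_p+1)^2 = O((\log x)^2)$ (which follows from $\max_p a_p \le \log x/\log 2$ and $\Omega(n) \le \log x/\log 2$), one obtains
$$\X(n) \ge \tau(n)\bigl(1 - O(\epsilon^2(\log x)^2)\bigr) = \tau(n)\bigl(1 - O(1/\log x)\bigr)$$
uniformly on $E$; in particular $\X(n) \ge 0$ there, so on $E$ the inner sum is at least $\Psi(x,y;\tau)(1-O(1/\log x))$. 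Therefore
$$\ex\left(\left|\sum_{\substack{n\le x\\ P(n)\le y}}\X(n)\right|^{2\ell}\right) \ge \Psi(x,y;\tau)^{2\ell}\bigl(1 - O(1/\log x)\bigr)^{2\ell}\Pr(E),$$
and rewriting $(1-O(1/\log x))^{2\ell} = \exp(O(\ell/\log x))$ and inserting the lower bound for $\Pr(E)$ gives the claim.

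The main obstacle is calibrating $\epsilon$: if $\epsilon$ is too large, then the relative deviation $|\X(p^a)-(a+1)|/(a+1)$ ceases to be small for the large exponents $a\asymp \log x/\log p$ that can occur among $y$-friable $n\le x$, so the product approximation $\X(n)\approx\tau(n)$ fails; if $\epsilon$ is too small, then $\Pr(E)$ drops below the required threshold $\exp(-10\,y\log\log x/\log y)$. The value $\epsilon=(\log x)^{-3/2}$ is forced by the worst-case bound $\sum(a_p+1)^2=O((\log x)^2)$ and leaves a comfortable margin in the constant $10$ for the Chebyshev estimate on $\pi(y)$.
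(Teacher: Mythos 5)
Your proposal is correct and follows essentially the same route as the paper: restrict to $y$-friable integers via Corollary \ref{CompareArithmetic}, condition on the event that every $\theta_p$ with $p\leq y$ is small, show $\X(n)\geq \tau(n)\left(1+O(1/\log x)\right)$ there, and multiply by the Sato--Tate probability of that event. The only differences are cosmetic: you take the threshold $(\log x)^{-3/2}$ instead of the paper's $(\log x)^{-2}$ (both comfortably beat the constant $10$), and you derive the pointwise bound on $\X(p^a)$ from $\cos u\geq 1-u^2/2$ rather than the expansion $\sin((a+1)\theta)/\sin\theta=(a+1)\left(1+O(a^2\theta^2)\right)$ used in the paper.
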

\begin{proof}
First, by Corollary \ref{CompareArithmetic} with $g(n)=1$ and $h(n)$ being the characteristic function of the $y$-friable numbers, we get
\begin{equation}\label{Smooth}
\ex\left(\left|\sum_{n\leq x}\X(n)\right|^{2\ell}\right)\geq \ex\left(\left|\sum_{\substack{n\leq x\\ P(n)\leq y}}\X(n)\right|^{2\ell}\right).
\end{equation}
For a prime $p$, write 
$$g^{\natural}_p= \left(\begin{matrix} e^{i \theta_p}&0\\0&e^{-i \theta_p}\end{matrix}\right)^{\natural},$$ where $\theta_p$ is a random variable taking values in $[0, \pi]$ and distributed according to the Sato-Tate distribution $d\mu_{st}(t):=\frac{2}{\pi}\sin^2(t)dt$ . Let $\mathcal{A}(\X)$ be the event corresponding to 
$$ |\theta_p|\leq \frac{1}{(\log x)^2}, \text{ for all primes } p\leq y.$$
By the independence of the $\theta_p$ for different primes $p$, we deduce that the probability of $\mathcal{A}(\X)$ is
$$ \pr(\mathcal{A}(\X)) =\left(\frac{2}{\pi}\int_0^{(\log x)^{-2}} \sin^2t dt\right)^{\pi(y)}\geq \left(\frac{c}{(\log x)^6}\right)^{\pi(y)}\gg \exp\left(-10 \frac{y \log\log x}{\log y}\right),$$
for some positive constant $c$. 
On the other hand, one can see that for any prime $p\leq y$ and all outcomes in $\mathcal{A}(\X)$, we have
$$ \X(p^a)=  \text{tr}(\text{Sym}^{a} g^{\natural}_{p})= \frac{\sin((a+1)\theta_p)}{\sin \theta_p}= (a+1)\left(1+O\left(a^2\theta_p^2\right)\right)=\tau(p^a) \left(1+O\left(\frac{a^2}{(\log x)^4}\right)\right).$$
Therefore, if $n\leq x$ and $P(n)\leq y$ then for all outcomes in $\mathcal{A}(\X)$ we have
$$
 \X(n)=\tau(n)  \left(1+O\left(\frac{\omega(n)}{(\log x)^2}\right)\right)= \tau(n)  \left(1+O\left(\frac{1}{\log x}\right)\right),
$$
where $\omega(n)$ is the number of distinct prime factors of $n$, which satisfies $\omega(n)\ll \log x$. Thus, we deduce that
\begin{align*}
\ex\left(\left|\sum_{\substack{n\leq x\\ P(n)\leq y}}\X(n)\right|^{2\ell}\right) &\geq \left(\sum_{\substack{n\leq x\\ P(n)\leq y}}\tau(n)  \left(1+O\left(\frac{1}{\log x}\right)\right)\right)^{2\ell} \pr(\mathcal{A}(\X))\\
&\gg \Psi(x,y;\tau)^{2\ell} \exp\left(-10\frac{y\log\log x}{\log y}+O\left(\frac{\ell}{\log x}\right)\right),
\end{align*}
as desired.
\end{proof}
We finish this section by proving Theorem \ref{Omega}, and deducing Corollary \ref{Omega2}.
\begin{proof}[Proof of Theorem \ref{Omega}]
Let $\ell= [\log k/(6 \log x)]$. Then, it follows from Proposition \ref{Moments} and Proposition \ref{BoundMomentsRandom} that 
\begin{align*}
\frac{1}{|\h|_h}\sum_{f\in \h} \omega_f \left|\sum_{n\leq x} \lambda_f(n)\right|^{2\ell} 
&\geq 
\Psi(x,y;\tau)^{2\ell} \exp\left(-10\frac{y\log\log x}{\log y}+O\left(\frac{\ell}{\log x}\right)\right)+O\left(k^{-1/3}\right)\\
& \geq \Psi(x,y;\tau)^{2\ell} \exp\left(-15\frac{\log k\log\log\log k}{(\log\log k)^2}\right). 
\end{align*}
Therefore, in view of \eqref{Harmonic1} and \eqref{Harmonic2} we obtain
\begin{equation}\label{LowerNatural}\sum_{f\in \h}  \left|\sum_{n\leq x} \lambda_f(n)\right|^{2\ell} \geq 
 \Psi(x,y;\tau)^{2\ell} \cdot k\exp\left(-20\frac{\log k\log\log\log k}{(\log\log k)^2}\right).
 \end{equation}
Let $\mathcal{B}$ be the set of Hecke cusp forms $f\in \h$ such that
$$
\left|\sum_{n\leq x} \lambda_f(n)\right| \geq \Psi(x,y;\tau) \left(1-\frac{1}{\sqrt{\log\log k}}\right).
$$
Since $|\h|\asymp k$ we obtain
$$ 
\sum_{f\in \h\setminus \mathcal{B}}\left|\sum_{n\leq x} \lambda_f(n)\right|^{2\ell}
\leq \Psi(x,y;\tau)^{2\ell} \cdot  k \exp\left(-\frac{\log k}{10A(\log\log k)^{3/2}}\right).
$$
Combining this bound with \eqref{LowerNatural} we get
\begin{equation}\label{LowerNatural2}
\sum_{f\in \mathcal{B}}  \left|\sum_{n\leq x} \lambda_f(n)\right|^{2\ell} \gg 
 \Psi(x,y;\tau)^{2\ell} \cdot k\exp\left(-20\frac{\log k\log\log\log k}{(\log\log k)^2}\right).
\end{equation}
On the other hand, we have 
$$
\sum_{f\in \mathcal{B}}  \left|\sum_{n\leq x} \lambda_f(n)\right|^{2\ell}\leq |\mathcal{B}| \left(\sum_{n\leq x}\tau(n)\right)^{2\ell}.
$$
Moreover, by \eqref{Bruijn} we have 
\begin{equation}\label{LowerBoundPSI}
\Psi(x,y;\tau)\geq \Psi(x, x^{1/(2A)}; \tau)\gg_A \sum_{n\leq x}\tau(n).
\end{equation}
Hence, we derive from \eqref{LowerNatural2} that
$$ |\mathcal{B}| \geq k 
 \exp\left(-20\frac{\log k\log\log\log k}{(\log\log k)^2}+O_A\left(\frac{\log k}{\log\log k}\right)\right),$$
 which completes the proof.
\end{proof}
\begin{proof}[Proof of Corollary \ref{Omega2}]
The result follows from Theorem \ref{Omega} together with Eq. \eqref{LowerBoundPSI}. 
\end{proof}

%%%%%%%%%%%%%%%%%%%%%%%%%%%%%%%%%%%%%%%%%%%%%%%
\section{Cancellations under GRH: proofs of Theorem \ref{Conditional} and Corollary \ref{Conditional2}}
Let $f\in \h$. For $\re(s)>1$ we have 
$$\log L(s, f)= \sum_{n=2}^{\infty} \frac{\Lambda(n)b_f(n)}{n^{s}\log n}, $$
where $b_f(n)=(e^{i\theta_f(p)})^a+(e^{-i\theta_f(p)})^a$ if $n=p^a$ for some prime $p$, and equals $0$ otherwise. For $y\geq 1$ we define
$$ 
L_y(s, f)= \sum_{P(n)\leq y}\frac{\lambda_f(n)}{n^s}= \prod_{p\leq y}\left(1-\frac{e^{i\theta_f(p)}}{p^s}\right)^{-1}\left(1-\frac{e^{-i\theta_f(p)}}{p^s}\right)^{-1}.
$$
In order to approximate $S_f(x)$ by $\Psi(x, y; \lambda_f)$, we shall prove that conditionally on GRH, $\log L(s, f)$ is very well approximated by $\log L_y(s, f)$ for $\re(s)\geq 1$. This will be the key ingredient in the proof of Theorem \ref{Conditional}.
\begin{lem}\label{ApproxFriable}
 Let $f\in \h$ and assume GRH for $L(s, f)$. Let $2\leq y\leq k$, and $s=\sigma+it$ with $\sigma\geq  1$ and $|t|\leq 2k$. Then, we have
$$ \log L(s, f)-\log L_y(s, f)\ll \frac{(\log y)^2\log k}{\sqrt{y}}.$$
\end{lem}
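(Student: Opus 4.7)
The plan is to express the difference as a Dirichlet series supported on prime powers whose underlying prime exceeds $y$, dispose of the contribution of higher prime powers ($a\geq 2$) by Deligne's bound, and then bound the remaining sum over primes via partial summation against a GRH-based estimate for $\psi_f(u,t):=\sum_{n\leq u}\Lambda(n)b_f(n)n^{-it}$.

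First I would combine the Euler product in \eqref{LFUNCTION} with the definition of $L_y(s,f)$ to obtain
$$\log L(s,f)-\log L_y(s,f)=\sum_{p>y}\sum_{a\geq 1}\frac{b_f(p^a)}{a p^{as}},$$
which converges absolutely for $\sigma>1$ and extends to $\sigma\geq 1$ by continuity (using GRH to ensure $L(s,f)\neq 0$ on this half-plane). Since $|b_f(p^a)|=|2\cos(a\theta_f(p))|\leq 2$, the terms with $a\geq 2$ are dominated by $\sum_{p>y}1/p^{2\sigma}\ll 1/\sqrt{y}$, comfortably inside the target error; a trivial estimate of the same flavor lets me replace the remaining $a=1$ sum by $\sum_{n>y}\Lambda(n)b_f(n)/(n^s\log n)$ up to an $O(1/\sqrt{y})$ error.

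The core analytic input is the GRH-based partial-sum bound
$$\psi_f(u,t)\ll \sqrt{u}(\log ku)^2,\qquad u\geq 2,\ |t|\leq 2k,$$
obtained from the truncated explicit formula applied to $L(s+it,f)$, combined with the Riemann--von Mangoldt-type estimate $N_f(T+1)-N_f(T)\ll\log(k(|T|+1))$ on the local density of non-trivial zeros. Armed with this, I would apply partial summation to the tail sum, with the factor $1/\log n$ producing a second integration by parts: the result is an integral of $\psi_f(u,t)\,u^{-\sigma-1}\bigl[\sigma/\log u+1/(\log u)^2\bigr]$ from $y$ to infinity. Substituting the GRH bound and splitting the integration range according to whether $u\leq k$ (where $\log ku\asymp\log k$) or $u>k$ yields a total of the order claimed, namely $(\log y)^2\log k/\sqrt{y}$.

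The main obstacle is establishing the $t$-uniform version of the explicit-formula bound for $\psi_f(u,t)$. The shift $s\mapsto s+it$ centers the relevant zero sum at height $t$, so one must control $\sum_{|\gamma|\leq T}u^{1/2}/|1/2+i(\gamma-t)|$ uniformly for $|t|\leq 2k$: the zeros with $|\gamma-t|\leq 1$ contribute $O(\sqrt{u}\log(k(|t|+1)))$ via the local density bound, while the tail $|\gamma-t|>1$ is summed dyadically using the zero-counting estimate. This $t$-bookkeeping with $|t|$ potentially as large as $2k$ is the technical point; the rest of the argument is routine.
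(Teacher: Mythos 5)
Your route is genuinely different from the paper's: the paper applies Perron's formula to $\log L(s+z,f)$ against the kernel $y^z/z$, shifts the contour to $\re(s+z)=\tfrac12+1/\log y$ (legitimate under GRH), and bounds $\log L$ on the shifted contour by $\ll \log k\log y$ via the Borel--Carath\'eodory estimate of Lemma \ref{Borel}; the factor $(\log y)^2\log k$ in the lemma is exactly the product of this bound with the length/kernel contribution $y^{1/2-\sigma}\log y$. You instead expand $\log L(s,f)-\log L_y(s,f)$ as a prime-power Dirichlet series and bound the tail by partial summation against a GRH estimate for $\psi_f(u,t)=\sum_{n\leq u}\Lambda(n)b_f(n)n^{-it}$. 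The skeleton is sound: the reduction to the tail sum, the $O(1/\sqrt{y})$ treatment of prime powers with $a\geq 2$, and the stated bound $\psi_f(u,t)\ll\sqrt{u}\,(\log ku)^2$ uniformly for $|t|\leq 2k$ are all fine (the convergence of the series on $\sigma=1$ should be justified by this same partial-summation bound rather than by non-vanishing, but that is a phrasing issue).

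The genuine gap is in the last step: the claimed conclusion does not follow from your stated input. Feeding $\psi_f(u,t)\ll\sqrt{u}(\log ku)^2$ into your partial summation, the boundary term at $u=y$ and the part of the integral with $u\asymp y$ give a total of size $(\log ky)^2/\bigl(y^{\sigma-1/2}\log y\bigr)$, i.e.\ $\asymp(\log k)^2/(\sqrt{y}\log y)$ at $\sigma=1$ (since $y\leq k$). This is \emph{larger} than the target $(\log y)^2\log k/\sqrt{y}$ whenever $(\log y)^3=o(\log k)$, so the lemma is not established on its stated range $2\leq y\leq k$; worse, even at $y=(\log k)^3$, the choice used to deduce Corollary \ref{Conditional2}, your bound is $\asymp(\log k)^{1/2}/\log\log k$, which is unbounded, while the lemma's bound is $o(1)$, so the downstream argument would collapse. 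The repair is to sharpen the explicit-formula estimate rather than the bookkeeping: truncating at height $T\asymp u$, the dyadic blocks $2^j\leq|\gamma-t|<2^{j+1}$ contribute $\ll\sqrt{u}\,\log\bigl(k(|t|+2^j+2)\bigr)$ each and there are only $\ll\log u$ of them, giving $\psi_f(u,t)\ll\sqrt{u}\,(\log u)\log(ku)+(\log ku)^2$; using this for $u\geq(\log k)^2$ and the trivial bound $\psi_f(u,t)\ll u$ for smaller $u$, your partial summation then yields $\ll\log(ky)/\sqrt{y}$ up to negligible terms, which implies (and in most of the range improves on) the lemma.
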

To prove this result we need the following standard bound.
\begin{lem}\label{Borel}
Let $f\in \h$. Let  $s=\sigma+it$ with $1/2<\sigma\leq 3/2$ and $|t|\leq 3k$. Let $1/2\leq \sigma_0<\sigma$, and suppose that there are no zeros of $L(z, f)$ inside the rectangle $\{z: \sigma_0\leq \re(z)\leq 1, |\im(z)-t|\leq 3\}$. Then, we have
$$\log L(s, f)\ll \frac{\log k}{\sigma-\sigma_0}.$$
\end{lem}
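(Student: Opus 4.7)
The plan is to apply the Borel--Carathéodory theorem to $\log L(z,f)$ on the closed disk $D$ centered at $z_0 := 2 + it$ of radius $R := 2 - \sigma_0 \in (1/2, 3/2]$. First I will check that $\log L(\cdot, f)$ extends holomorphically to $D$, normalized so as to agree with the convergent Dirichlet series at $z_0$. For this I need $L(z,f)$ to be zero-free on $D$: the part of $D$ with $\re(z) > 1$ lies in the region of absolute convergence of the Euler product, while the part with $\re(z) \le 1$ satisfies $|\im(z) - t| \le R \le 3/2$ and so falls inside the zero-free rectangle of the hypothesis. (A cosmetic shrinking of $R$ by $\varepsilon$ handles the boundary.)

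Next I will assemble the two quantitative inputs. At $z_0$ the Dirichlet series converges absolutely, with each Euler factor of the shape $1 + O(p^{-2})$, giving $|\log L(z_0, f)| = O(1)$. On $\partial D$ I claim the bound $\re \log L(z, f) = \log |L(z, f)| \le M$ with $M \ll \log k$. This follows from the standard convexity argument: one has $|L(z, f)| \ll (\log k)^{O(1)}$ for $\re(z) \ge 1 + 1/\log k$ directly from the Euler product, and $|L(z, f)| \ll (k(1 + |\im z|))^{O(1)}$ for $\re(z) \ge -\varepsilon$ from the functional equation; Phragm\'en--Lindel\"of interpolates. Since every $z \in \partial D$ satisfies $\re(z) \ge \sigma_0 \ge 1/2$ and $|\im(z)| \le 3k + 2$, the analytic conductor is $\ll k^{O(1)}$ and the resulting bound is $\log |L(z, f)| \ll \log k$.

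With these inputs in hand, I apply Borel--Carath\'eodory to the holomorphic function $g(z) := \log L(z, f) - \log L(z_0, f)$ on $D$. It vanishes at $z_0$ and satisfies $\re g \le M + O(1)$ on $\partial D$, so the theorem yields $|g(z)| \le \tfrac{2r}{R-r}\,(M + O(1))$ for $|z - z_0| = r < R$. Specializing to $z = s = \sigma + it$ gives $r = 2 - \sigma \le 3/2$ and $R - r = \sigma - \sigma_0$, so
$$|\log L(s, f)| \;\le\; |g(s)| + |\log L(z_0, f)| \;\ll\; \frac{\log k}{\sigma - \sigma_0},$$
as required. The only non-automatic step in the above is the uniform boundary bound $\log |L(z, f)| \ll \log k$ on $\partial D$, which I expect to follow routinely from the functional equation and the Phragm\'en--Lindel\"of principle; all remaining manipulations are direct verifications.
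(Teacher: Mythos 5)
Your argument is correct and follows essentially the same route as the paper: Borel--Carath\'eodory on circles centred at $2+it$ with radii $r=2-\sigma$ and $R=2-\sigma_0$, holomorphy of $\log L(z,f)$ from the zero-free rectangle, the convexity bound giving $\re\log L(z,f)\ll\log k$ on the outer circle, and the absolutely convergent Euler product giving $\log L(2+it,f)=O(1)$. Subtracting $\log L(z_0,f)$ so the function vanishes at the centre is just the standard normalization implicit in the version of Borel--Carath\'eodory quoted in the paper, so there is no substantive difference.
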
 
\begin{proof}
Consider the circles with centre $2+it$ and radii $r=2-\sigma$ and $R=2-\sigma_0$, so that the smaller circle passes through $s$. By our assumption, $\log L(z, f)$ is analytic inside the larger circle. For a point $z$ on the larger circle, it follows from the standard convexity bound for $L(s, f)$ that $\re \log L(z, f)\ll \log k$. Finally, using the Borel-Caratheodory theorem we obtain
$$ 
\log L(s, f)\leq \frac{2r}{R-r}\max_{|z-2-it|=R}\re \log L(z, f)+ \frac{R+r}{R-r}|\log L(2+it, f)|\ll \frac{\log k}{\sigma-\sigma_0}.
$$
\end{proof}

\begin{proof}[Proof of Lemma \ref{ApproxFriable}]
Let $c_1=1-\sigma+1/\log y$.  Then it follows from Perron's formula (see \cite{Da}) that
\begin{align*}
\frac{1}{2\pi i} \int_{c_1-iy}^{c_1+iy} \log L(s+z, f) \frac{y^z}{z} dz &= \sum_{n\leq y} \frac{\Lambda(n)b_f(n)}{n^s\log n}+ O\left(y^{c_1}\sum_{n=1}^{\infty} \frac{1}{n^{\sigma+c_1}}\min\left(1, \frac{1}{y\log|y/n|}\right)\right)\\
&= \sum_{n\leq y} \frac{\Lambda(n)b_f(n)}{n^s\log n}+ O\left(y^{-\sigma}\log y\right),
\end{align*}
by a standard estimation of the error term. We now move the contour to the line $\re(s)=c_2$ where $c_2=1/2-\sigma+1/\log y$. By our assumption, we only encounter a simple pole at $z=0$ that leaves a residue of $\log L(s, f)$. Furthermore, it follows from Lemma \ref{Borel} with $\sigma_0=1/2$ that
$$ \log L(s+z, f)\ll \log k \log y,$$
uniformly for $z$ with $\re(z)\geq c_2$ and $|\im(z)|\leq y$. 
Therefore, we deduce that
$$ \frac{1}{2\pi i} \int_{c_1-iy}^{c_1+iy} \log L(s+z, f) \frac{y^z}{z} dz= \log L(s, f)+\mathcal{E}, $$
where 
\begin{align*}
\mathcal{E}&= \frac{1}{2\pi i} \left(\int_{c_1-iy}^{c_2-iy}+\int_{c_2-iy}^{c_2+iy}+ \int_{c_2+iy}^{c_1+iy}\right)\log L(s+z, f) \frac{y^z}{z} dz\ll \frac{(\log y)^2\log k}{y^{1/2-\sigma}}.
\end{align*}
The result follows upon noting that 
$$ 
\log L_y(s, f)- \sum_{n\leq y} \frac{\Lambda(n)b_f(n)}{n^s\log n}= 
\sum_{\substack{p\leq y\\ p^a>y}} \frac{(e^{i\theta_f(p)})^a+(e^{-i\theta_f(p)})^a}{ap^{as}}\ll \sum_{\substack{p\leq y\\ p^a>y}} \frac{1}{a p^{2a}}\ll \frac{1}{\sqrt{y}}. 
$$
\end{proof}
We now prove Theorem \ref{Conditional}. 

\begin{proof}[Proof of Theorem \ref{Conditional}]
Without loss of generality assume that $x\in \mathbb{Z}+1/2$. Let $c=1+1/\log x$. By Perron's formula  together with \eqref{Deligne} we have
$$
 \sum_{n\leq x} \lambda_f(n)
=\frac{1}{2\pi i}\int_{c-ix}^{c+ix} L(s, f) \frac{x^s}{s}ds+ O\left(\frac{1}{x} \sum_{n=1}^{\infty} \frac{x^c }{n^c}\frac{\tau(n)}{|\log (x/n)|}\right).
$$
The error term above is 
\begin{align*}
&\ll \sum_{n=1}^{\infty} \frac{\tau(n)}{n^{1+1/\log x}}+ \sum_{x/2<n<2x} \frac{\tau(n)}{n|\log(x/n)|}\\
&\ll_{\varepsilon} \zeta(1+1/\log x)^2+ x^{\varepsilon/2}\sum_{r\leq x}\frac{1}{r} \ll_{\varepsilon}  x^{\varepsilon}.
\end{align*}
Similarly, we have 
$$ \Psi(x, y; \lambda_f)= \frac{1}{2\pi i}\int_{c-ix}^{c+ix} L_y(s, f) \frac{x^s}{s}ds+ O_{\varepsilon} (x^{\varepsilon}).$$
Define 
$$
R_y(s, f):= \frac{L(s, f)}{L_y(s, f)}.
$$
Then, combining the above estimates we get
\begin{align*}
 \sum_{n\leq x} \lambda_f(n)- \Psi(x, y; \lambda_f)
 &= \frac{1}{2\pi i}\int_{c-ix}^{c+ix}\left(L(s, f)- L_y(s, f)\right) \frac{x^s}{s}ds+ O_{\varepsilon} (x^{\varepsilon})\\
 &= \frac{1}{2\pi i}\int_{c-ix}^{c+ix} L_y(s, f) \big(\exp(\log R_y(s, f))-1\big)\frac{x^s}{s}ds+ O_{\varepsilon} (x^{\varepsilon}).\\
\end{align*}
Moreover, using Lemma \ref{ApproxFriable} we obtain
$$
 \exp(\log R_y(s, f))= 1+O\left(\frac{(\log y)^2\log q}{\sqrt{y}}\right),
$$
for all $s$ with $\re(s)=c$ and $|\im(s)|\leq x$.
Furthermore, note that for $\re(s)=c$ we have 
$$ 
L_y(s, f)= \exp\left(\sum_{p\leq y} \frac{\lambda_f(p)}{p^s}+O(1)\right) \ll (\log y)^2,
$$
since $\lambda_f(p)\leq 2$. 
Combining these estimates, we deduce that
$$
 \frac{1}{2\pi i}\int_{c-ix}^{c+ix} L_y(s, f) \big(\exp(\log R_y(s, f))-1\big)\frac{x^s}{s}ds\ll \frac{x(\log x)(\log y)^4\log q}{\sqrt{y}},
$$ 
which completes the proof.

\end{proof}

In order to deduce Corollary \ref{Conditional2}, we need to prove the bound \eqref{DecayFriable},  which shows that $\Psi(x, y; \tau)=o(x\log x)$ when $u=\log x/\log y\to \infty$. 

\begin{lem}\label{Friable}
Let $10\leq y\leq x$ be real numbers. Then we have
$$\Psi(x, y; \tau)\ll e^{-u/2} x\log x .$$
\end{lem}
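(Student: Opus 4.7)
The plan is to apply Rankin's trick. For any $\sigma > 0$,
\[
\Psi(x, y;\tau) \leq x^{\sigma} \sum_{P(n)\leq y} \tau(n) n^{-\sigma} = x^{\sigma} \prod_{p\leq y}(1-p^{-\sigma})^{-2},
\]
and I would take $\sigma = 1 - 1/(2\log y)$, which makes $x^{\sigma} = xe^{-u/2}$. To bound the Euler product, I would expand $p^{1/(2\log y)} = 1 + O((\log p)/\log y)$ uniformly for $p\leq y$, and combine with Mertens' estimates $\sum_{p\leq y} 1/p = \log\log y + O(1)$ and $\sum_{p\leq y} (\log p)/p = \log y + O(1)$ to get $\sum_{p\leq y} p^{-\sigma} = \log\log y + O(1)$. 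The tail contributions $\sum_{p}\sum_{k\geq 2} p^{-k\sigma}/k$ are uniformly bounded since $2\sigma > 1$, so
\[
\Psi(x, y;\tau) \ll x\,e^{-u/2}(\log y)^2.
\]

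It remains to compare this intermediate estimate with the target $x e^{-u/2}\log x$. Since $\log x = u\log y$, the ratio of the two sides equals $\log y / u$, so I would split into cases according to the relative size of $\log y$ and $u$. If $u \geq \log y$ (equivalently $(\log y)^{2} \leq \log x$), the estimate above already suffices. If $u < \log y$, I would consider two subcases: when $u$ is bounded, the trivial estimate $\Psi(x, y;\tau) \leq \sum_{n\leq x}\tau(n)\ll x\log x$ and the fact that $e^{-u/2}$ is then bounded below give the bound directly. When $u$ is unbounded but still $u < \log y$, we lie safely inside Smida's range of validity for the de Bruijn--van Lint asymptotic $\Psi(x, y;\tau) \sim \rho_2(u) x\log y$, and the elementary estimate $\rho_2(u) \ll u\,e^{-u/2}$ (which one verifies from the initial condition $\rho_2(u)=u$ on $[0,1]$, the differential-difference equation $u\rho_2'(u)=\rho_2(u)-2\rho_2(u-1)$, and the superexponential decay $\rho_2(u)=u^{-u(1+o(1))}$) yields $\Psi(x, y;\tau)\ll u\,e^{-u/2} x\log y = e^{-u/2} x\log x$.

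The main technical obstacle is precisely this middle regime in which $u$ is large but $\log y$ is larger still: the plain Rankin bound is off by a stray factor $\log y/u$, and one is forced to appeal either to the sharper saddle-point asymptotics of de Bruijn--van Lint (as extended by Smida) or to a direct saddle-point expansion of the Mellin transform of $\prod_{p\leq y}(1-p^{-s})^{-2}$ in order to close the gap.
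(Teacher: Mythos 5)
Your proposal is correct, but it takes a genuinely different and heavier route than the paper. You run plain Rankin with $\sigma=1-1/(2\log y)$, obtaining $\Psi(x,y;\tau)\ll e^{-u/2}x(\log y)^2$, note that this falls short of the target by the factor $\log y/u$, and then repair the troublesome regime $1\ll u<\log y$ by invoking Smida's extension of the de Bruijn--van Lint asymptotic $\Psi(x,y;\tau)\sim\rho_2(u)\,x\log y$ together with $\rho_2(u)\ll u e^{-u/2}$. This does work: in that regime $\log y>\sqrt{\log x}$, which places you comfortably inside Smida's range where the asymptotic is uniform, and the bound on $\rho_2$ follows from the initial condition, continuity on compact sets and the superexponential decay (your ``bounded versus unbounded $u$'' split should really be phrased with a fixed threshold $U_0$, and very small $x$ absorbed into the implied constant, but these are cosmetic points). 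The paper's proof, by contrast, is elementary, uniform in $10\le y\le x$, and needs no case analysis: it applies the Rankin weight $n^{\beta}x^{-3\beta/4}\ge 1$ with $\beta=2/(3\log y)$ only to the range $n>x^{3/4}$, and then bounds $\sum_{n\le x,\,P(n)\le y}n^{\beta}\tau(n)$ by a Halberstam--Richert-type mean value theorem (Corollary 3.5.1 of Tenenbaum), which gives $\frac{x}{\log x}\prod_{p\le y}\bigl(1+2p^{\beta}/p\bigr)\ll\frac{x}{\log x}(\log y)^2$. The $1/\log x$ saved by that mean value theorem is exactly the factor your plain Rankin bound is missing: since $(\log y)^2/\log x\le\log x$ always, one gets $e^{-u/2}x(\log y)^2/\log x\ll e^{-u/2}x\log x$ directly. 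So the ``technical obstacle'' you identify is closed in the paper not by saddle-point asymptotics but by this smoothed Rankin/mean-value device; your route imports a deep input (Smida) precisely where the lemma was designed to avoid it, while the paper's is shorter and self-contained, which is what the uniform weaker bound was introduced for.
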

\begin{proof}
Let $\beta=2/(3\log y)$. Then, observe that
$$
 \Psi(x, y; \tau)\leq \sum_{n\leq x^{3/4}} \tau(n)+ x^{-3\beta/4}\sum_{\substack{x^{3/4}\leq n\leq x\\ P(n)\leq y}} n^{\beta}\tau(n)\ll x^{3/4}\log x+ e^{-u/2}\sum_{\substack{n\leq x\\ P(n)\leq y}} n^{\beta}\tau(n).$$
Let 
$$
g(n)=\begin{cases} n^{\beta}\tau(n) & \text{ if } P(n)\leq y,\\
0 & \text{ otherwise}.\end{cases}
$$
 Then $g$ is multiplicative, and for all primes $p\leq y$ we have $g(p^a)=(a+1)p^{a\beta}\ll (1.9)^a$. Therefore, by Corollary 3.5.1 of \cite{Te} we obtain
$$ \sum_{\substack{n\leq x\\ P(n)\leq y}} n^{\beta}\tau(n) \ll \frac{x}{\log x} \prod_{p\leq y} \left(\sum_{a=0}^{\infty} \frac{g(p^a)}{p^a}\right)\ll \frac{x}{\log x} \prod_{p\leq y} \left(1+\frac{2p^{\beta}}{p}\right).
$$
The result follows upon noting that $x^{3/4}\ll x e^{-u/2}$ for $y\geq 10$ and
$$ 
\prod_{p\leq y} \left(1+\frac{2p^{\beta}}{p}\right) \ll \exp\left(2\sum_{p\leq y} \frac{1+O(\beta \log p)}{p}\right)\ll (\log y)^2.
$$
\end{proof}

\begin{proof}[Proof of Corollary \ref{Conditional2}]
The result holds trivially for $x>k$ by \eqref{WeakRangeGRH}, so we may assume that $x\leq k$. Then, using Theorem \ref{Conditional} with $y=(\log k)^3$, together with Lemma \ref{Friable} and our assumption on $x$ completes the proof.
\end{proof}

\section*{Acknowledgements} 

I would like to thank Emmanuel Kowalski for useful comments concerning the probabilistic random model for the Hecke eigenvalues in Section 3.

\end{document}